\pgfplotsset{compat=1.18}
\pgfplotsset{%
    layers/standard/.define layer set={%
        background,axis background,axis grid,axis ticks,axis lines,axis tick labels,pre main,main,axis descriptions,axis foreground%
    }{
        grid style={/pgfplots/on layer=axis grid},%
        tick style={/pgfplots/on layer=axis ticks},%
        axis line style={/pgfplots/on layer=axis lines},%
        label style={/pgfplots/on layer=axis descriptions},%
        legend style={/pgfplots/on layer=axis descriptions},%
        title style={/pgfplots/on layer=axis descriptions},%
        colorbar style={/pgfplots/on layer=axis descriptions},%
        ticklabel style={/pgfplots/on layer=axis tick labels},%
        axis background@ style={/pgfplots/on layer=axis background},%
        3d box foreground style={/pgfplots/on layer=axis foreground},%
    },
}
\colorlet{RefColor}{green!50!black}
\colorlet{LinkColor}{red!50!black}
\newtheorem{proposition}{Proposition}
\newtheorem{remark}{Remark}
\colorlet{texcscolor}{blue!50!black}
\colorlet{texemcolor}{red!70!black}
\colorlet{texpreamble}{red!70!black}
\colorlet{codebackground}{black!25!white!25}
\DeclareOldFontCommand{\sc}{\normalfont\scshape}{\@nomath\sc}
\colorlet{header1}{blue!10!black}
\newcommand{\email}[1]{\protect\href{mailto:#1}{#1}}
\newcommand\keywordsname{Key words}
\newcommand\keywordname{Key word}
\newcommand\MSCcodesname{MSC codes}
\newcommand\MSCcodename{MSC code}
\newenvironment{@abssec}[1]{%
     \if@twocolumn
       \section*{#1}%
     \else
       \vspace{.05in}\footnotesize
       \parindent .2in
         \@hangfrom{\color{header1}\bfseries #1. }\ignorespaces 
     \fi}
     {\if@twocolumn\else\par\vspace{.1in}\fi}
\newenvironment{keywords}{\begin{@abssec}{\keywordsname}}{\end{@abssec}}
\newenvironment{MSCcodes}{\begin{@abssec}{\MSCcodesname}}{\end{@abssec}}
\lstdefinestyle{siamlatex}{%
  style=tcblatex,
  texcsstyle=*\color{texcscolor},
  texcsstyle=[2]\color{texemcolor},
  keywordstyle=[2]\color{texemcolor},
  moretexcs={cref,Cref,maketitle,mathcal,text,headers,email,url},
}
\DeclareTotalTCBox{\code}{ v O{} }
{ %
  fontupper=\ttfamily\color{black},
  nobeforeafter,
  tcbox raise base,
  colback=codebackground,colframe=white,
  top=0pt,bottom=0pt,left=0mm,right=0mm,
  leftrule=0pt,rightrule=0pt,toprule=0mm,bottomrule=0mm,
  boxsep=0.5mm,
  #2}{#1}
\patchcmd\newpage{\vfil}{}{}{}
\renewcommand{\changed}[1]{\textcolor{black}{#1}}
  \title{\color{header1}{Greedy construction of quadratic manifolds for nonlinear dimensionality reduction and nonlinear model reduction}\thanks{The authors were supported by the US Department of Energy, Office of Scientific Computing Research,  DOE Award DE-SC0019334 (Program Manager Dr.~Steven Lee) and DOE Award DE-SC0024721 (Program Manager Dr.~Margaret Lentz).}}
\author{{Paul Schwerdtner}\thanks{Courant Institute of Mathematical Sciences, New York University (\email{paul.schwerdtner@nyu.edu}, \email{pehersto@cims.nyu.edu}).}
\and {Benjamin Peherstorfer}\footnotemark[2]}
\date{December 2024}
\newcommand{\specialcell}[1]{\ifmeasuring@#1\else\omit$\displaystyle#1$\ignorespaces\fi}
\begin{document}
\maketitle

\begin{tcbverbatimwrite}{tmp_\jobname_abstract.tex}
  \begin{abstract}
  Dimensionality reduction on quadratic manifolds augments linear approximations with quadratic correction terms. Previous works rely on linear approximations given by projections onto the first few leading principal components of the training data; however, linear approximations in subspaces spanned by the leading principal components alone can miss information that are necessary for the quadratic correction terms to be efficient. In this work, we propose a greedy method that constructs subspaces from leading as well as later principal components so that the corresponding linear approximations can be corrected most efficiently with quadratic terms.
  Properties of the greedily constructed manifolds allow applying linear algebra reformulations so that the greedy method scales to data points with millions of dimensions. Numerical experiments demonstrate that an orders of magnitude higher accuracy is achieved with the greedily constructed quadratic manifolds compared to manifolds that are based on the leading principal components alone.
  \end{abstract}

  \begin{keywords}
    dimensionality reduction, quadratic manifolds, greedy methods, physics applications, model reduction
  \end{keywords}

  \begin{MSCcodes}
  65F55,  %
    62H25, %
    65F30, %
    68T09 %
  \end{MSCcodes}
\end{tcbverbatimwrite}
\input{tmp_\jobname_abstract.tex}
\section{Introduction}
\label{sec:introduction}
Linear dimensionality reduction in subspaces given by the principal component analysis (PCA) can lead to poor approximations when correlations between components of data points are strongly nonlinear \cite{ScholkopfSM1998Nonlinear}. For example, if data points represent transport phenomena, e.g., frames of a video that show a moving coherent structure such as a car or ship driving by, then PCA provides poor dimensionality reduction in the sense that a large number of principal components is necessary to well approximate the data points \cite{10.1145/1970392.1970395,PhysRevFluids.5.054401,Peherstorfer2022Breaking}. To circumvent this limitation of linear dimensionality reduction, nonlinear correction terms can be added to the linear approximations given by the PCA. In this work, we focus on correction terms that are obtained by evaluating a nonlinear feature map at the linear approximations of the data points, which is widely useful in a range of science and engineering applications such as nonlinear model reduction \cite{RozzaHP2008Reduced,BennerGW2015survey,AntoulasBG2020Interpolatory,KramerPW2024Learning} and closure modeling  \cite{Sagaut2006Large,WangABI2012Proper,DuraisamyIX2019Turbulence,CoupletSB2003Intermodal,XieMRI2018Data-Driven,GouasmiPD2017priori,ZannaB2020Data-Driven,PanD2018Data-Driven}.

The motivation for us are the works \cite{GeelenWW2023Operator,BarnettF2022Quadratic} that propose to find a matrix to weight the output of a polynomial feature map so that the corresponding correction term efficiently reduces the error of PCA approximations. The authors of  \cite{GeelenWW2023Operator,BarnettF2022Quadratic} show that such corrections are helpful precisely for data that represent transport phenomena. Rather than using the subspace given by the PCA for the linear approximations as in \cite{GeelenWW2023Operator,BarnettF2022Quadratic}, we construct the subspace together with the weight matrix for the given feature map. The goal is to construct a subspace for the linear approximations that leads to the lowest error with the correction terms based on the given feature map, which is not necessarily obtained with the PCA subspaces. Recall that the feature map is evaluated at the linear approximations of the data points (rather than the original, high-dimensional data points) and thus the linear subspace approximations need to carry the information that are necessary for the feature map to provide efficient corrections.
We propose a greedy method that selects the basis vectors of the subspace of the linear approximations based on the given feature map and show with various examples that greedily selecting the subspace outperforms by orders of magnitude the accuracy obtained when correcting PCA approximations.

There is a wide range of nonlinear dimensionality reduction methods \cite{TenenbaumSL2000Global,RoweisS2000Nonlinear,DonohoG2003Hessian,BelkinN2003Laplacian,MaatenH2008Visualizing,HintonS2006Reducing}. 
However, we focus specifically on nonlinear approximations that are obtained by correcting linear approximations with nonlinear terms given by feature maps that are evaluated at the linear approximations, which is useful in nonlinear model reduction and closure modeling, as mentioned above. We further focus on polynomial feature maps because polynomial nonlinear terms are pervasive in applications: Polynomial manifolds have been used in \cite{QiaoZWZ2013explicit} and quadratic manifolds in \cite{JainTRR2017quadratic,RutzmoserRTJ2017Generalization}. Quadratic manifolds are used to model latent dynamics in \cite{GeelenBW2023Learning, BarnettF2022Quadratic,goyal2024generalized,https://doi.org/10.1002/pamm.202200049,SHARMA2023116402,yildiz2024datadriven} and have been shown to achieve higher accuracy than linear approximations as given by, e.g., dynamic mode decomposition and related linear methods \cite{rowley2009spectral,schmid2010dynamic,tu2013dynamic,kutz2016dynamic}. Quadratic polynomials are important also for formulating nonlinear system dynamics with guaranteed stability as in \cite{PhysRevFluids.6.094401,SAWANT2023115836,goyal2024guaranteed}, which shows that focusing on quadratic manifolds is of relevance in many applications in science and engineering \cite{doi:10.1137/16M1098280,PEHERSTORFER2016196,QIAN2020132401,doi:10.1137/14097255X,5991229,Schlegel_Noack_2015}. 

The works \cite{GeelenWW2023Operator,BarnettF2022Quadratic} fit the weight matrix such that a quadratic feature map is most efficient in a least-squares sense for PCA approximations, which is different from our approach because we greedily select the subspace of the linear approximations rather than using PCA and fitting the weight matrix only. 
The works \cite{GeelenBW2023Learning,GeelenBWW2024Learning} propose an alternating minimization approach to fit the subspace of the linear approximations and the weight matrix of the correction term together; however, the alternating minimization gives approximations without an explicit encoder map. 
Additionally, as we will demonstrate with our numerical experiments, the optimization with alternating minimization can be computationally demanding and is orders of magnitude slower than the proposed greedy approach. In fact, in our numerical examples with high-dimensional data vectors, the alternating minimization approach became intractable in terms of runtime, which is also due to its slow convergence \cite{ChenHYY2014direct}. \changed{We remark that greedy methods have been developed for constructing linear approximation spaces in model reduction \cite{10.1115/1.1448332,https://doi.org/10.1002/fld.867,doi:10.1137/070694855,GreplGreedy}.}

This manuscript is organized as follows. We first provide preliminaries in Section~\ref{sec:preliminaries} and state the problem formulation. The greedy construction of quadratic manifolds is introduced in Section~\ref{sec:our_method} \changed{and its properties are discussed in Section~\ref{sec:Disc}}. Numerical experiments are shown in Section~\ref{sec:numerical_experiments} and conclusions are drawn in Section~\ref{sec:Conc}.

\section{Preliminaries}
\label{sec:preliminaries}
In this section, we briefly discuss dimensionality reduction on manifolds based on feature maps and introduce the problem formulation.

\subsection{Linear approximations in subspaces} %
\label{sub:dimensionality_reduction}
Let $\Vcal \subset \mathbb{R}^{\nfull}$ be an $r$-dimensional subspace of $\mathbb{R}^{\nfull}$. Let further $\Vlin = [\bfv_1, \dots, \bfv_r] \in \mathbb{R}^{\nfull \times r}$ be a basis matrix of $\Vcal$ that has orthonormal columns with respect to the Euclidean inner product. 
We denote the orthogonal projection operator corresponding to $\Vcal$  and the Euclidean inner product as $\bfP_{\Vcal}: \mathbb{R}^{\nfull} \to \Vcal$, which we interpret as a matrix $\bfP_{\Vcal} = \Vlin\Vlin^{\top}\in \mathbb{R}^{\nfull \times \nfull}$. 
The projection onto $\Vcal$ can be written as the composition of an encoder $f_{\Vlin}: \mathbb{R}^{\nfull} \to \mathbb{R}^{\nred}, \fullstate \mapsto \Vlin^{\top}\fullstate$ and a decoder $g_{\Vlin}: \mathbb{R}^{\nred} \to \mathbb{R}^{\nfull}, \redstate \mapsto \Vlin\redstate$. Notice that $f_{\Vlin}$ and $g_{\Vlin}$ are linear. Notice further that an encoder is sometimes called embedding map and a decoder a lifting map.

Consider now $\nsnapshots \in \mathbb{N}$ data points $\fullstatei{1}, \dots, \fullstatei{\nsnapshots} \in \R^{\nfull}$ of dimension $\nfull$, which we collect as columns in a data matrix $\snapshots = [\fullstatei{1}, \dots, \fullstatei{k}] \in \R^{\nfull \times \nsnapshots}$. We refer to $f_{\Vlin}(\fullstate) = \Vlin^{\top}\fullstate = \redstate$ as the encoded data point $\redstate \in \mathbb{R}^{\nred}$ and to $g_{\Vlin}(f_{\Vlin}(\fullstate)) = \Vlin\redstate$ as the approximated data point. The sum of the errors of projecting the data points onto $\Vcal$ is
\begin{equation}\label{eq:Prelim:ProjError}
  \reconstructionError(\Vcal, \snapshots) = \|\snapshots - \bfP_{\Vcal}\snapshots\|_F\,.
\end{equation}
Recall that the lowest projection error \eqref{eq:Prelim:ProjError} for an $\nred$-dimensional subspace of the column space of $\snapshots$ is obtained by the PCA space $\Scal \subset \mathbb{R}^{\nfull}$ spanned 
by the left-singular vectors $\USVDi{1}, \dots, \USVDi{\nred} \in \mathbb{R}^{\nfull}$ of $\snapshots$ with the $r \leq \nsnapshots$ largest singular values $\sigma_1 \geq \dots \geq \sigma_r \geq \sigma_{r + 1} \geq \dots \geq \sigma_{\nsnapshots}$.

\subsection{Manifold approximations via nonlinear decoders}
Adding nonlinear corrections can lead to approximations with lower errors than PCA. %

\subsubsection{Corrections that depend on encoded data points only}
Consider a decoder with a nonlinear correction as
\begin{equation}\label{eq:Prelim:LiftingWithH}
  g_{\Vlin,H}(\redstate) = \Vlin\redstate + H(\redstate)\,,
\end{equation}
where $H: \mathbb{R}^{\nred} \to \mathbb{R}^{\nfull}$ is a nonlinear function. 
Using the encoder $f_{\Vlin}(\fullstate) = \Vlin^{\top}\fullstate$, a data point $\fullstate \in \mathbb{R}^{\nfull}$ can be approximated as
\[
  (g_{\Vlin, H} \circ f_{\Vlin})(\fullstate) =  \bfP_{\Vcal}\fullstate + H(f_{\Vlin}(\fullstate))\,,
\]
which shows that the correction term $H(f_{\Vlin}(\fullstate)) = H(\Vlin^{\top}\fullstate) \in \mathbb{R}^{\nfull}$ is added to the best approximation in $\Vcal$ given by the orthogonal projection $\bfP_{\Vcal}\fullstate$. 

Because the nonlinear correction term is added when decoding (``lifting'') back to the high-dimensional representation, the correction term $H(\Vlin^{\top}\fullstate)$ depends nonlinearly on the encoded data point $f_{\Vlin}(\fullstate) = \Vlin^{\top}\fullstate$ %
only, rather than on the original, high-dimensional data point $\fullstate$. Thus, the orthogonal parts of $\fullstate$ with respect to $\Vcal$ cannot inform the additive correction.  We can equivalently say that the correction term depends only on the projected data point $\bfP_{\Vcal}\fullstate = \Vlin\Vlin^{\top}\fullstate$ because the linear decoding $\Vlin\redstate$ is just a different representation of the encoded data point $\Vlin^{\top}\fullstate$. That the correction depends only on the projected data point $\bfP_{\Vcal}\fullstate$ is important in many applications where only the encoded point $\redstate$ is available such as in model reduction \cite{RozzaHP2008Reduced,BennerGW2015survey,AntoulasBG2020Interpolatory,KramerPW2024Learning} and closure modeling \cite{Sagaut2006Large,WangABI2012Proper,DuraisamyIX2019Turbulence,CoupletSB2003Intermodal,XieMRI2018Data-Driven,GouasmiPD2017priori,ZannaB2020Data-Driven,PanD2018Data-Driven,Uy2021}. 

\subsubsection{Manifold approximations given by nonlinear corrections}
Given a subspace $\Vcal$ and a map $H$ that induces a correction term, the decoder $g_{\Vlin, H}$ and encoder $f_{\Vlin}$ lead to the manifold
\begin{equation}\label{eq:Prelim:ManifoldDef}
  \manifold_{\nred}(\Vlin, H)=\{ g_{\Vlin, H}(\redstate) \,|\, \redstate \in \R^{\nred} \} \subset \R^{\nfull}\,.
\end{equation}
Because $H$ is nonlinear, the manifold $\manifold_{\nred}$ can contain points in $\mathbb{R}^{\nfull}$ that are outside of the subspace $\Vcal$. 
Notice that the image of a linear decoder map $g_{\Vlin}$ is the $\nred$-dimensional (linear) subspace $\Vcal$ of $\R^{\nfull}$ that is spanned by the columns of $\Vlin$. 

\subsubsection{Correction terms via polynomial feature maps}
\label{subsubsec:prev_polynomial_feature_map}
The works~\cite{JainTRR2017quadratic,RutzmoserRTJ2017Generalization,GeelenWW2023Operator,BarnettF2022Quadratic} allow correction maps $H$ that are of the form
\[
  H(\redstate) = \Vnonlin h(\redstate)\,,
\]
where $\Vnonlin \in \mathbb{R}^{\nfull \times \nredmod}$ is a matrix and $h: \mathbb{R}^{\nred} \to \mathbb{R}^{\nredmod}$ is a feature map that lifts the encoded data point $\redstate$ onto a $\nredmod$-dimensional vector. The matrix $\Vnonlin$ can be understood as a weight matrix to obtain an $\nfull$-dimensional correction from the $\nredmod$-dimensional feature vector $h(\redstate)$. 
We denote the corresponding decoder as 
\begin{equation}\label{eq:GVW}
  g_{\Vlin,\Vnonlin}(\redstate) = \Vlin\redstate + \Vnonlin h(\redstate)\,, 
\end{equation}
which highlights that the feature map $h$ is given and only $\Vlin$ and $\Vnonlin$ can be fitted to data.

In \cite{JainTRR2017quadratic,RutzmoserRTJ2017Generalization,GeelenWW2023Operator,BarnettF2022Quadratic}, the feature map $h$ is a polynomial function such as a quadratic
\begin{equation}
  \label{eq:condensed_kronecker}
  \kronfeaturemap: \R^{\nred} \to \R^{\nred(\nred+1)/2},  \redvec \mapsto \begin{bmatrix}
    \redveci{1} \redveci{1} & \redveci{1} \redveci{2} & \dots & \redveci{1} \redveci{\nred} & \redveci{2} \redveci{2} &\dots& \redveci{r} \redveci{r}
  \end{bmatrix}^\top\,,
\end{equation}
in which case we refer to the manifold $\manifold_{\nred}$ defined in \eqref{eq:Prelim:ManifoldDef} as quadratic manifold. Feature maps with higher order polynomials are considered in, e.g.,  \cite{GeelenBWW2024Learning,GeelenBW2023Learning}.

Given a data matrix $\snapshots$ and a feature map $h$, the authors of \cite{GeelenWW2023Operator,BarnettF2022Quadratic} set the columns of $\Vlin$ to be the leading $\nred$ left-singular vectors of $\snapshots$ and then fit $\Vnonlin$ via a regularized linear least-squares problem to minimize the error of approximating $\snapshots$ as
\begin{equation}\label{eq:Prelim:LinearLSQApproach}
  \min_{\Vnonlin \in \mathbb{R}^{\nfull \times \nredmod}} \|\bfP_{\Vcal}\snapshots + \Vnonlin h(f_{\Vlin}(\snapshots)) - \snapshots\|_F^2 + \gamma \|\Vnonlin\|_F^2 \,,
\end{equation}
where we overload the notation of $h$ to allow $h$ to be evaluated column-wise on the matrix $f_{\Vlin}(\snapshots) = \Vlin^{\top}\snapshots$ to obtain $h(f_{\Vlin}(\snapshots)) \in \mathbb{R}^{\nredmod \times \nsnapshots}$.
The regularization term is controlled by $\gamma > 0$ and can prevent overfitting of the weight matrix $\Vnonlin$ to data in $\snapshots$.

\begin{remark}
  In~\cite{GeelenBW2023Learning, GeelenBWW2024Learning}, the authors propose an alternating minimization approach to fit $\Vlin$ and $\Vnonlin$; however, in doing so, the authors also obtain a nonlinear encoder $f$ and thus the approximations obtained in \cite{GeelenBW2023Learning, GeelenBWW2024Learning} are not of the type that can be described with a nonlinear decoder $g_{\Vlin,\Vnonlin}$ as \eqref{eq:GVW} and a linear encoder $f_{\Vlin}$ with the same $\Vlin$ as used in the decoder $g_{\Vlin,\Vnonlin}$. \changed{In fact, the encoder proposed in~\cite{GeelenBW2023Learning, GeelenBWW2024Learning} consists of a nonlinear optimization problem that aims to minimize the reconstruction error and the encoder function is thus} not available in closed form. Details about the alternating minimization approach are given in Appendix~\ref{appx:AlternatingMin}. Even though the alternating minimization approach leads to a different setting, we will numerically compare to it later. \changed{Moreover, we will compare the performance of linear and nonlinear encoders to embed points onto the same quadratic manifold.}
  The work \cite{BarnettFM2023Neural-network-augmented} also uses the leading $\nred$ left-singular vectors to span $\Vcal$ but then parametrizes the map $H_{\bftheta}: \mathbb{R}^{\nred} \to \mathbb{R}^{\nfull}$ with a neural network. Thus, instead of having given a feature map $h$ and fitting only the weight matrix $\Vnonlin$ for a decoder of the form given in  \eqref{eq:GVW}, the authors of \cite{BarnettFM2023Neural-network-augmented} fit the parameter vector $\bftheta$ of the neural network $H_{\bftheta}$ to minimize the error of approximating the left-singular vectors of index greater than $\nred$.
\end{remark}

\begin{figure}
  \begin{center}
    \resizebox{0.99\columnwidth}{!}{\scriptsize\input{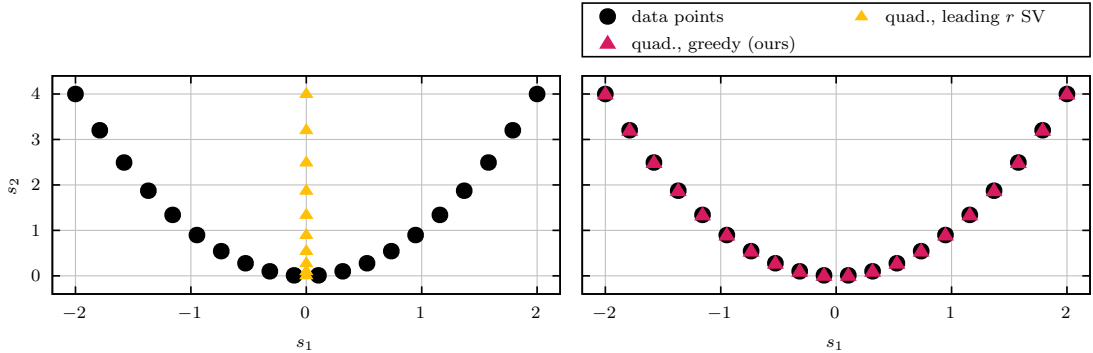}}
    \vspace{-0.5cm}
  \end{center}
  \caption{The plots show the data points given by the data matrix $\snapshots^{(\mathrm{parabola})}$ and their respective approximation on one-dimensional quadratic manifolds. Constructing the quadratic manifold based on the leading $\nred = 1$ left-singular vector alone leads to poor approximations, as can be seen in the left plot. In contrast, greedily selecting the subspace $\Vcal$ with the proposed approach leads to quadratic-manifold approximations that exactly represent the data points, see plot on the right.} 
  \label{fig:quadraticillustration}
\end{figure}

\subsection{Problem formulation}
\label{sec:Prelim:problem_formulation}
We now illustrate on a toy example that letting $\Vcal$ be spanned by the leading $\nred$ left-singular vectors of the data matrix $\snapshots$ can lead to inefficient corrections and thus poor approximations.

Recall the linear encoder $f_{\Vlin}$ that depends on $\Vlin$ and the nonlinear decoder $g_{\Vlin,\Vnonlin}$ defined in \eqref{eq:GVW} that depends on $\Vlin$ and $\Vnonlin$ and additionally on a given feature map $h$. Consider the data matrix $\snapshots^{(\mathrm{parabola})}$ with columns 
\begin{equation}\label{eq:Prelim:ParabolaDataPoints}
  \fullstatei{i} = \begin{bmatrix}-2+4(i-1)/19\\ (-2+4(i-1)/19)^2\end{bmatrix} \in \mathbb{R}^2\,,\qquad i = 1, \dots, 20\,,
\end{equation}
which are plotted in Figure~\ref{fig:quadraticillustration}a. 
With 
\begin{equation}\label{eq:Prelim:Exact}
  \Vlin = [1, 0]^{\top} \in \mathbb{R}^{2 \times 1}\,,\qquad \Vnonlin = [0, 1]^{\top} \in \mathbb{R}^{2 \times 1}\,,
\end{equation}
the data points \eqref{eq:Prelim:ParabolaDataPoints} can be exactly represented in dimension $\nred = 1$ with the decoder $g_{\Vlin,\Vnonlin}$, linear encoder $f_{\Vlin}$, and quadratic feature map $\kronfeaturemap$ defined in \eqref{eq:condensed_kronecker}. 
However, notice that the subspace spanned by the columns of $\Vlin$ is not the subspace spanned by the first left-singular vector of $\snapshots^{(\mathrm{parabola})}$, which is $\Scal = \operatorname{span}\{[0, 1]^T\}$. If $\Vcal = \Scal$, then the encoder ignores the first component of all data points \eqref{eq:Prelim:ParabolaDataPoints}, which means that the reduced data points 
\[
  \redstatei{i} = [(-2+4(i-1)/19)^2]\,,\qquad i = 1, \dots, 20\,,
\]
are not informative for determining the corrections with the feature map $h$, independent of the weight matrix $\Vnonlin$. Recall that $h$ is evaluated at the encoded data points $\redstate$ rather than the original, high-dimensional data points $\fullstate$. Thus, in this toy example, the information carried by the first component of the data points is lost in the encoded data points when projecting onto the first $\nred = 1$ leading left-singular vectors and thus the encoded data points are un-informative for finding a correction with $h$.

The observation that the choice of the subspace $\Vcal$ is critical for the quality of the approximations with a given feature map can also be explained with the insights given in~\cite{CohenFSM2023Nonlinear}, where it is noted that the encoder \eqref{eq:GVW} can be interpreted as taking nonlinear measurements $\Vnonlin h(\redstate)$ of the projected data point $\Vlin\redstate = \bfP_{\Vcal}\fullstate$. If $\Vcal$ and the feature map $h$ are incompatible in the sense that the subspace $\Vcal$ does not carry information needed for the feature map $h$ to provide informative measurements, then the correction cannot be efficient.

\section{Greedy construction of quadratic manifolds}
\label{sec:our_method}
We propose a method to construct subspaces $\Vcal$ specifically for a given feature map $h$ such that the data $\snapshots$ is well approximated on the corresponding manifold $\manifold_{\nred}$, instead of using a subspace that is agnostic to the feature map $h$ such as the space spanned by the first $\nred$ left-singular vectors of $\snapshots$. 
We introduce a greedy method that selects a set of basis vectors of $\Vcal$ from the first $m \gg \nred$ left-singular vectors of $\snapshots$, instead of simply taking the first $\nred$ only. 

In Section~\ref{sec:Greedy:Method}, we introduce the greedy method to construct quadratic manifolds as well as other manifolds for given feature maps $h$. We further introduce in Section~\ref{sec:Greedy:CompRed} a computational procedure for the greedy method that builds on linear least-squares problems with unknowns scaling independently of the dimension $\nfull$ and instead with the number of data points $\nsnapshots$, which typically is smaller than $\nfull$. Section~\ref{sec:Greedy:AlgorithmicDescription} provides an algorithmic description.

\subsection{Greedy selection strategy}\label{sec:Greedy:Method}
Recall that $\USVDi{1}, \dots, \USVDi{\nsnapshots} \in \mathbb{R}^{\nfull}$ are the left-singular vectors of the data matrix $\snapshots$ ordered descending with respect to the singular values $\sigma_1 \geq \dots \geq \sigma_{\nsnapshots}$. We now greedily select $\nred$ left-singular vectors $\USVDi{j_1}, \dots, \USVDi{j_{\nred}}$ with indices $j_1, j_2, \dots, j_{\nred} \in \mathbb{N}$ from the first $\nconsider \gg \nred$ left-singular vectors $\USVDi{1}, \dots, \USVDi{\nconsider}$. We stress that the indices $j_1, \dots, j_{\nred}$ of the left-singular vectors that we select do not necessarily correspond to the first $\nred$ left-singular vectors with the largest singular values. 

Let $i = 1, \dots, \nred$ be the iteration counter variable of the greedy selection and define $\Vcal_i$ as the subspace at iteration $i$ that is spanned by the columns of the basis matrix $\Vlin_i = [\USVDi{j_1}, \dots, \USVDi{j_i}]$. Analogously to \eqref{eq:Prelim:LinearLSQApproach}, we define the objective function
\begin{equation}\label{eq:Greedy:J}
J(\bfv, \Vlin, \Vnonlin) = \|\bfP_{\Vcal \oplus \operatorname{span}\{\bfv\}}\snapshots + \Vnonlin h(f_{[\Vlin, \bfv]}(\snapshots)) - \snapshots\|_F^2 + \gamma \|\Vnonlin\|_F^2\,,
\end{equation}
where the subspace $\bfP_{\Vcal \oplus \operatorname{span}\{\bfv\}}$ denotes the orthogonal projection operator of the subspace $\Vcal \oplus \operatorname{span}(\bfv)$ spanned by the columns of $\Vlin$ and the vector $\bfv$. 
The function $f_{[\Vlin, \bfv]}$ is the linear encoder corresponding to the space $\Vcal \oplus \operatorname{span}\{\bfv\}$ with basis matrix $[\Vlin, \bfv]$. 
At iteration $i$, we select the left-singular vector $\USVDi{j_i}$ with index $j_i$ that minimizes the objective $J$ defined in \eqref{eq:Greedy:J} over all $\Vnonlin \in \mathbb{R}^{\nfull \times \nredmod}$ and the subspace $\Vcal_{i - 1}$ of the previous iteration $i - 1$,
\begin{equation}\label{eq:Greedy:GreedyOptiProblem}
\min_{j_i = 1, \dots, m} \min_{\Vnonlin \in \mathbb{R}^{\nfull \times \nredmod}} J(\USVDi{j_i}, \Vlin_{i-1}, \Vnonlin)\,,
\end{equation}
where we start  at iteration $i = 0$ with the subspace $\Vcal_0$ that contains only the zero element.
After $\nred$ iterations, we obtain the basis matrix $\Vlin = [\USVDi{j_1}, \dots, \USVDi{j_{\nred}}] \in \mathbb{R}^{\nfull \times \nred}$ and compute the corresponding weight matrix $\Vnonlin$ by solving \eqref{eq:Prelim:LinearLSQApproach}, which give rise to the nonlinear decoder $g_{\Vlin,\Vnonlin}$ defined in \eqref{eq:GVW} with feature map $h$ and the linear encoder $f_{\Vlin}$.

\subsection{Accelerating repeated least-squares solves for efficient greedy selection}\label{sec:Greedy:CompRed}
We now discuss how to re-use a pre-computed singular value decomposition (SVD) of the data matrix $\snapshots$ to accelerate the repeated solves of the least-squares problem for $\Vnonlin$ in \eqref{eq:Greedy:GreedyOptiProblem}. 

\subsubsection{Re-using the pre-computed SVD of the data matrix}
In each greedy iteration $i = 1, \dots, \nred$, the optimization problem \eqref{eq:Greedy:GreedyOptiProblem} is solved. We now show how we can re-use the SVD of the data matrix $\snapshots$, which has to be computed to obtain the principal components, to also reduce the costs of the inner least-squares problem over $\Vnonlin$ in \eqref{eq:Greedy:GreedyOptiProblem}.

Let $\snapshots = \USVD \SigmaSVD \VSVD^\top$ be the SVD of the data matrix $\snapshots$, where the left-singular vectors are the columns of $\USVD$, the right-singular vectors are the columns of $\VSVD$, and the singular values are in descending order on the diagonal of $\SigmaSVD$. Recall that at iteration $i$ of the greedy procedure, the objective function \eqref{eq:Greedy:J} is minimized for $\Vnonlin$ at the subspace $\Vcal_{i - 1}$ spanned by the columns of the basis matrix $\Vlin_{i - 1} = [\USVDi{j_1}, \dots, \USVDi{j_{i-1}}]$ and over all left-singular vectors $\USVDi{1}, \dots, \USVDi{\nconsider}$ up to index $\nconsider$. In particular, for all $j^{\prime} = 1, \dots, \nconsider$, the objective \eqref{eq:Greedy:J} depends on the projection error $\bfP_{\Vcal_{i - 1} \oplus \operatorname{span}\{\USVDi{j^{\prime}}\}}\snapshots - \snapshots$. Because $\Vcal_{i-1}$ is spanned by left-singular vectors of $\snapshots$ and $\USVDi{j^{\prime}}$ is also a left-singular vector, the projection error can be represented as
\begin{equation}\label{eq:Greedy:ProjErrorRepresentation}
\bfP_{\Vcal_{i - 1} \oplus \operatorname{span}\{\USVDi{j^{\prime}}\}}\snapshots - \snapshots = \USVD_{\indexout{i-1}\setminus\{j^{\prime}\}}\SigmaSVD_{\indexout{i-1}\setminus\{j^{\prime}\}}\VSVD_{\indexout{i-1}\setminus\{j^{\prime}\}}^{\top}\,,
\end{equation}
where $\indexin{i-1} = \{j_1, \dots, j_{i - 1}\}$ and $\indexout{i-1} = \{1, \dots, \nsnapshots\}\setminus\indexin{i-1}$ is the complement set of $\indexin{i-1}$. The matrix $\USVD_{\indexout{i-1}\setminus\{j^{\prime}\}}$ contains as columns all left-singular vectors with indices in $\indexout{i-1}\setminus\{j^{\prime}\}$, which are the indices $1, \dots, \nsnapshots$ except $j_1, \dots, j_{i - 1}$ and $j^{\prime}$. Analogously, $\VSVD_{\indexout{i-1}\setminus\{j^{\prime}\}}$ and $\SigmaSVD_{\indexout{i-1}\setminus\{j^{\prime}\}}$ contain the right-singular vectors and the singular values, respectively, corresponding to the indices in $\indexout{i-1}\setminus\{j^{\prime}\}$. For computing the term $h(f_{[\Vlin,\bfv]}(\snapshots))$ in \eqref{eq:Greedy:J}, we can analogously use
\[
[\Vlin_{i-1},  \USVDi{j^{\prime}}]^{\top}\snapshots = \SigmaSVD_{\indexout{i-1}\setminus\{j^{\prime}\}}\VSVD_{\indexout{i-1}\setminus\{j^{\prime}\}}^{\top}\,.
\]

We summarize that it is sufficient to compute the SVD of $\snapshots$ once and then to re-use it to evaluate the objectives during the greedy iterations without having to compute SVDs of the intermediate matrices containing data points again.

\subsubsection{Reduced number of unknowns in least-squares problems}\label{sec:Greedy:RuntimeImprovement}
At iteration $i$ of the greedy procedure, minimizing the objective function over $\Vnonlin \in \mathbb{R}^{\nfull \times \nredmod}$ can be interpreted as solving $l = 1, \dots, \nfull$ linear least-squares problems with the $\nsnapshots \times \nredmod$ system matrix $h(f_{[\Vlin, \bfv]}(\snapshots))^{\top}$ and right-hand sides given by the columns of $(\bfP_{\Vcal \oplus \operatorname{span}\{\bfv\}}\snapshots - \snapshots)^{\top} \in \mathbb{R}^{\nsnapshots \times \nfull}$.
Each least-squares problem provides one of the $\nfull$ rows of $\Vnonlin$. In many cases of interest, the number of data points $\nsnapshots$ is smaller than the dimension of the data points $\nfull$; see numerical examples in Section~\ref{sec:numerical_experiments}.%

Recall that with \eqref{eq:Greedy:ProjErrorRepresentation}, we have given the SVD of the projection error, which is used in the objective \eqref{eq:Greedy:J}.
\changed{Noting that $\|\ALemma\CLemma\|_F=\|\ALemma\|_F$ for any $\ALemma \in \R^{\nLemma \times \pLemma}$ and $\CLemma \in \R^{\pLemma \times \qLemma}$, where $\CLemma$ has orthonormal rows and $\qLemma \geq \pLemma$}
and realizing that $\USVD_{\indexout{i-1}\setminus\{j^{\prime}\}}$ is a matrix with orthonormal rows and as long as $\nsnapshots \leq \nfull$, we obtain that evaluating the objective function $J$ defined in \eqref{eq:Greedy:J} at the minimum $\Vnonlin \in \mathbb{R}^{\nfull \times \nredmod}$ gives the same objective value as evaluating the objective function
\begin{align}
\label{eq:Jprime}
J^{\prime}(\USVDi{j^{\prime}}, \Vlin_{i - 1}, \Vnonlin^{\prime}) = \left\|\SigmaSVD_{\indexout{i-1}\setminus\{j^{\prime}\}}\VSVD_{\indexout{i-1}\setminus\{j^{\prime}\}}^{\top} + \Vnonlin^{\prime} h(f_{[\Vlin_{i-1}, \USVDi{j^{\prime}}]}(\snapshots)) \right\|_F^2 + \gamma \|\Vnonlin^{\prime}\|_F^2
\end{align}
at its respective minimum $\Vnonlin^{\prime} \in \mathbb{R}^{(\nsnapshots -i) \times \nredmod}$. 
The objective $J^{\prime}$ describes $\nsnapshots - i$ many linear least-squares problems, rather than $\nfull$ many as the formulation via the objective $J$ given in \eqref{eq:Greedy:J}. In particular, the dimension of the unknown $\Vnonlin^{\prime}$ is independent of the dimension of the data points $\nfull$. Notice that the objective $J^{\prime}$ is only minimized to compute the minimal objective value of $J$ and that $\Vnonlin^{\prime}$ and $\Vnonlin$ are not used in intermediate greedy iterations to solve \eqref{eq:Greedy:GreedyOptiProblem}. 

\subsection{Algorithm description}\label{sec:Greedy:AlgorithmicDescription}
The proposed greedy method is  described in  Algorithm~\ref{alg:ourquadmani}. The algorithm takes as input the data matrix $\snapshots$, the reduced dimension $\nred$, a regularization parameter $\gamma$, the feature map $\featuremap$ and the number $\nconsider$ of candidate singular vectors to consider. The algorithm first computes the SVD of the data matrix $\snapshots$, which is then re-used to rapidly evaluate the objective $J$ defined in \eqref{eq:Greedy:J} at the minimum via the objective $J^{\prime}$ defined in \eqref{eq:Jprime}. The algorithm therefore iterates over the dimensions $i = 1, \dots, \nred$ and solves in each iteration problem \eqref{eq:Greedy:GreedyOptiProblem} to obtain the index $j_i$ of the left-singular vector to expand the subspace $\Vcal_{i - 1}$ of the previous iteration. Notice that the algorithm uses the objective $J^{\prime}$ given in \eqref{eq:Jprime}. 
The sets $\indexin{i}$ and $\indexout{i}$ are then updated and used in the next iteration to compute the intermediate objective \eqref{eq:Jprime}.
The algorithm terminates when the subspace $\Vcal$ of dimension $\nred$ has been constructed.

\begin{algorithm}[t]
  \caption{Greedy construction of quadratic manifolds}
  \label{alg:ourquadmani}
  \begin{algorithmic}[1]
    \Procedure{GreedyQM}{$\snapshots, \nred, \gamma, \featuremap, \nconsider$}
    \State{Compute the SVD of the snapshot matrix $\USVD \SigmaSVD \VSVD^\top = \snapshots$} 
    \State{Set $\indexin{0}=\{\}, \indexout{0}=\{1, \dots, \nsnapshots\}, \Vlin_0 = []$}
    \For{$i = 1,\dots, r$}
    \State{Compute $\USVDi{j_i}$ that minimizes~\eqref{eq:Jprime} over all $\USVDi{j^{1}}, \dots, \USVDi{j^m}$ and $\Vnonlin^{\prime} \in \mathbb{R}^{(\nsnapshots -i) \times \nredmod}$}
    \State{Set $\indexin{i}=\{j_1, \dots, j_i\}$ and $\indexout{i}=\{1, \dots, \nsnapshots\}\setminus \indexin{i}$}
    \State{Set $\Vlin_i = [\USVDi{1}, \dots, \USVDi{j_i}]$}
    \EndFor
    \State{Set $\Vlin=[\USVDi{j_1}, \dots, \USVDi{j_r}]$}
    \State{Compute $\Vnonlin$ via the regularized least-squares problem \eqref{eq:Prelim:LinearLSQApproach}.}
    \State{Return $\Vlin$ and $\Vnonlin$.}
    \EndProcedure
  \end{algorithmic}
\end{algorithm}

{
\color{black}
\section{Discussion}\label{sec:Disc}
In this section, we discuss properties of the proposed greedy method and provide insights about its performance. 

\subsection{Bounding error of quadratic-manifold approximations fro below}
\label{sec:lower_bounds}
Lower bounds of the error of approximating data points on quadratic manifolds are studied in~\cite{BuchfinkGH2023Approximation}. However, the lower bounds in \cite{BuchfinkGH2023Approximation} depend on bounds of the Kolmogorov n-width of the set of elements that is to be approximated on the quadratic manifold, which are unavailable in our situation. In contrast, in this section, we build on~\cite{BuchfinkGH2023Approximation} to derive lower bounds that hold for given data sets. 
The lower bound that we show in the following is analogous to the results for linear approximation spaces, for which the projection error in the Frobenius norm can be bounded with sums of the truncated squared singular values of the corresponding data matrix \cite{GolubV-L2013Matrix}. We now prove that for a given data matrix $\snapshots$, the error of approximations on quadratic manifolds of dimension $\nred$ cannot be lower than the error of approximations in linear approximation spaces of dimension $\nredmod+\nred$, which in turn is lower bounded by the truncated sum of the squared singular values. It is important to note that the following lower bound holds independent of the encoder that is used.

\begin{proposition}
\label{prop:lower_bound}
Consider a manifold $\mathcal{M}_r(\Vlin, H)$ as defined in \eqref{eq:Prelim:ManifoldDef} with matrix $\Vlin \in \mathbb{R}^{\nfull \times \nred}$ and dimension $\nred \leq \nfull$. The correction map $H: \mathbb{R}^{\nred} \to \mathbb{R}^{\nfull}$ is given via a feature map $\featuremap: \R^{\nred} \to \R^{\nredmod}$ as $H(\redstate)=\Vnonlin \featuremap(\redstate)$, where $p \geq \nred$ and $\Vnonlin \in \mathbb{R}^{\nfull \times \nredmod}$ is a weight matrix. Let now $\snapshots = [\fullstatei{1}, \dots, \fullstatei{k}] \in \R^{\nfull \times  \nsnapshots}$ be a data matrix with singular values $\singval{1},\dots,\singval{\minkn}$, where $\ell = \min(n, k)$ and $\ell \geq \nred$. Then, the averaged error of approximating the columns of $\snapshots$ on $\mathcal{M}_{\nred}(\Vlin, H)$ is bounded from below as 
    \begin{equation}
    \label{eq:LowerBound:prop}
\sum_{i=1}^{\nsnapshots}\min_{\hat{\fullstate}^{(i)}\in \mathcal{M}_r}\|\hat{\fullstate}^{(i)}-\fullstatei{i}\|_2^2 \geq \sum\limits_{i=\nredmod+\nred+1}^{\minkn}\singval{i}^2\,.
\end{equation}
\end{proposition}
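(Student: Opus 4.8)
The plan is to exploit that, although $\manifold_{\nred}(\Vlin, H)$ is a nonlinear set, all of its points are contained in a single \emph{linear} subspace of dimension at most $\nredmod + \nred$; once this containment is established, the bound \eqref{eq:LowerBound:prop} follows from the Eckart--Young--Mirsky best-low-rank-approximation result together with the elementary fact that restricting a minimization to a subset can only increase the optimal value.

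First I would write a generic manifold point as $g_{\Vlin, \Vnonlin}(\redstate) = \Vlin\redstate + \Vnonlin \featuremap(\redstate)$ and note that $\Vlin\redstate \in \Range(\Vlin)$ and $\Vnonlin\featuremap(\redstate) \in \Range(\Vnonlin)$ for every $\redstate \in \R^{\nred}$. Hence every manifold point lies in the subspace $\mathcal{W} := \Range([\Vlin, \Vnonlin])$ spanned by the columns of the concatenated matrix $[\Vlin, \Vnonlin] \in \R^{\nfull \times (\nred + \nredmod)}$, so that $\manifold_{\nred}(\Vlin, H) \subseteq \mathcal{W}$ and $\dim \mathcal{W} \leq \nred + \nredmod$.

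Next, since the manifold is a subset of $\mathcal{W}$, for each column $\fullstatei{i}$ the best approximation on $\manifold_{\nred}$ is no closer than the best approximation in all of $\mathcal{W}$, which is the orthogonal projection $\bfP_{\mathcal{W}}\fullstatei{i}$; that is,
\[
\min_{\hat{\fullstate}^{(i)} \in \manifold_{\nred}} \euclsq{\hat{\fullstate}^{(i)} - \fullstatei{i}} \;\geq\; \euclsq{\fullstatei{i} - \bfP_{\mathcal{W}}\fullstatei{i}}\,.
\]
Summing over $i = 1, \dots, \nsnapshots$ turns the right-hand side into $\frobsq{\snapshots - \bfP_{\mathcal{W}}\snapshots}$. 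Because every column of $\bfP_{\mathcal{W}}\snapshots$ lies in $\mathcal{W}$, its rank is at most $\dim \mathcal{W} \leq \nred + \nredmod$, so the best rank-$(\nred + \nredmod)$ approximation result \cite{GolubV-L2013Matrix} gives $\frobsq{\snapshots - \bfP_{\mathcal{W}}\snapshots} \geq \sum_{i = \nredmod + \nred + 1}^{\minkn} \singval{i}^2$, which is exactly the claimed bound.

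I do not expect a genuine obstacle: the entire argument rests on the single structural observation that the feature-map correction $\Vnonlin\featuremap(\redstate)$ never leaves the $(\nred + \nredmod)$-dimensional range of $[\Vlin, \Vnonlin]$. The only points requiring care are bookkeeping ones — that rank deficiency of $\Vlin$ or $\Vnonlin$, or overlap between $\Range(\Vlin)$ and $\Range(\Vnonlin)$, only lowers $\dim \mathcal{W}$ and therefore strengthens the bound, and that the inequality holds for \emph{any} assignment of approximants $\hat{\fullstate}^{(i)} \in \manifold_{\nred}$. In particular the bound is independent of the encoder used to select points on $\manifold_{\nred}$, as the statement emphasizes, since the argument never refers to how the $\hat{\fullstate}^{(i)}$ are chosen.
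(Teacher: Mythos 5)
Your proposal is correct and follows essentially the same route as the paper: both arguments reduce the problem to the observation that every point of $\manifold_{\nred}(\Vlin,H)$ lies in the at most $(\nredmod+\nred)$-dimensional subspace spanned by the columns of $[\Vlin,\Vnonlin]$, and then invoke the Eckart--Young best low-rank approximation bound. The paper phrases this as relaxing $\featuremap(\snapshots_r)$ to a free coefficient matrix $\widetilde{\snapshots}_r$ in a matrix least-squares problem, which is the same relaxation you express via orthogonal projection onto $\Range([\Vlin,\Vnonlin])$; your final step (bounding via the rank of the projected matrix rather than asserting the relaxed minimum equals the truncated singular-value sum) is, if anything, stated slightly more carefully.
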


\begin{proof} 
    Note that
    \begin{align}
        \sum_{i=1}^{\nsnapshots}\min_{\hat{\fullstate}^{(i)}\in \mathcal{M}_r}\|\hat{\fullstate}^{(i)}-\fullstatei{i}\|_2^2 = \min_{\snapshots_r \in \R^{\nred \times \nsnapshots}}\frobsq{
        \Vlin \snapshots_r + \Vnonlin\featuremap(\snapshots_r)-\snapshots
        }
    \end{align}
    holds. 
    A monotonicity argument yields
    \begin{align*}
        \min_{\snapshots_r \in \R^{\nred \times \nsnapshots}}\frobsq{
        \Vlin \snapshots_r + \Vnonlin\featuremap(\snapshots_r)-\snapshots
        } \ge
        \min_{\snapshots_r \in \R^{\nred \times \nsnapshots}, \widetilde{\snapshots}_r \in \R^{\nredmod \times \nsnapshots}}\frobsq{
        \Vlin \snapshots_r + \Vnonlin\widetilde{\snapshots}_r-\snapshots
        } =
        \sum\limits_{i=\nredmod+\nred+1}^{\minkn}\singval{i}^2,
    \end{align*}
    because we can represent $$\Vlin \snapshots_r + \Vnonlin\widetilde{\snapshots}_r=\begin{bmatrix} \Vlin & \Vnonlin \end{bmatrix} \begin{bmatrix} \snapshots_r^\top & \widetilde{\snapshots}_r^\top \end{bmatrix}^\top$$ and so set it to the best approximation of rank $\nredmod+\nred$ of $\snapshots$, which is given by the truncated singular value decomposition.
\end{proof}
Proposition~\ref{prop:lower_bound} states that the error of approximating the columns of a data matrix $\snapshots$ on a quadratic manifold $\mathcal{M}_r(\Vlin, H)$, i.e., where the feature map underlying $H$ is a quadratic function \eqref{eq:condensed_kronecker}, is lower bounded by the best linear approximation error in an $\nred(\nred + 1)/2 + \nred$-dimensional space.

\subsection{Approximating projections onto later left-singular vectors from encoded data points}
\label{sec:discussion_correlation}
We now discuss requirements for quadratic manifolds to achieve more accurate approximations than linear approximation spaces of the same dimension, which provides further motivation for the proposed greedy method.  
\subsubsection{Projections onto later left-singular vectors}\label{sec:Discussion:Correlation1}
Consider a data matrix $\snapshots \in \mathbb{R}^{\nfull \times \nsnapshots}$. Let now $\Vlin \in \mathbb{R}^{\nfull \times \nred}$ have orthonormal columns that span an $\nred$-dimensional subspace of the column space of $\snapshots$ and let $\breve{\Vlin} \in \mathbb{R}^{\nfull \times (\nsnapshots - \nred)}$ have orthonormal columns that span the orthogonal complement of the column space of $\Vlin$. Let $\fullstate \in \mathbb{R}^{\nfull}$ be a data point that we represent as
\begin{equation}\label{eq:Discussion:StateSVD}
\fullstate = \Vlin\redstate + \breve{\Vlin}\breve{\fullstate}_r\,,
\end{equation}
with $\redstate = \Vlin^{\top}\fullstate$ and $\breve{\fullstate}_r = \breve{\Vlin}^{\top}\fullstate$. 
Let us now compare \eqref{eq:Discussion:StateSVD} with an approximation $\hat{\fullstate} \in \mathcal{M}_r(\Vlin, H)$ that we obtain on a quadratic manifold with basis matrix $\Vlin$ and correction function $H(\redstate) = \Vnonlin h(\redstate)$,
\begin{equation}\label{eq:Discussion:StateQM}
\hat{\fullstate} = \Vlin\redstate + \Vnonlin h(\redstate)\,,
\end{equation}
where we used the linear encoder $f_{\Vlin}(\fullstate) = \Vlin^{\top}\fullstate = \redstate$. 
Comparing the representation of the data point $\fullstate$ given in \eqref{eq:Discussion:StateSVD} with the quadratic-manifold approximation \eqref{eq:Discussion:StateQM}, one can observe that the correction term is supposed to capture $\breve{\Vlin}\breve{\fullstate}_r$. 
If the matrix $\Vlin$ for the encoding $\redstate = f_{\Vlin}(\fullstate) = \Vlin^{\top}\fullstate$ and the weight matrix $\Vnonlin$ of the quadratic manifold are constructed such that \begin{equation}\label{eq:Disc:VSWH}
\breve{\Vlin}\breve{\fullstate}_r = \Vnonlin h(\redstate),
\end{equation}
then the quadratic manifold can exactly represent the data points, $\hat{\fullstate} = \fullstate$. 
There are two key requirements such that $\Vnonlin h(\redstate)$ approximates $\breve{\Vlin}\breve{\fullstate}_r$ well. 
First, the dimension $p$ of the feature space has to be large enough so that the column space of $\Vnonlin$ can be sufficiently high dimensional and rich to well approximate that subspace of the column space of $\breve{\Vlin}$ that is most important for approximating $\fullstate$. 
Second, the encoded data point $\redstate$ lifted into the feature space $h(\redstate)$ has to carry enough information so that it well approximates the coordinates $\Vnonlin^{+}\fullstate$ of the (oblique) projection of $\fullstate$ onto the column space of $\Vnonlin$, where $\Vnonlin^{+}$ denotes the Moore-Penrose pseudo-inverse of $\Vnonlin$. %
Because we fix the encoder to be linear $f_{\Vlin}(\fullstate) = \Vlin^{\top}\fullstate$, this means that we have to choose $\Vlin$ such that the coordinates (encoded data point) $\redstate = \Vlin^{\top}\fullstate$ carry enough information for approximating the coordinates $\Vnonlin^{+}\fullstate$. 
Notice that we briefly discuss these insights already in Section~\ref{sec:Prelim:problem_formulation}. We also stress that the authors of \cite{GeelenBWW2024Learning,CohenFSM2023Nonlinear} and even already \cite{DeanKKO1991Low} provide these insights.

\subsubsection{Greedily establishing correlation between lifted encoded data points and coordinates of projections onto orthogonal complements}
We now discuss why the proposed greedy method finds a basis matrix $\Vlin$ and a weight matrix $\Vnonlin$ such that (a) the lifted encoded data points $h(\Vlin^{\top}\snapshots)$ are informative for obtaining the coordinates of the projection $\Vnonlin^{+}\snapshots$ and (b) the column space of $\Vnonlin$ approximates well the subspace of the orthogonal complement of the column space of $\Vlin$ that is most important for approximating $\snapshots$. 

Recall that our greedy method  %
selects $\nred$ left-singular vectors of $\snapshots$  from the first $\nconsider > \nred$ ones for forming the basis matrix $\Vlin$: %
The index set $\indexin{\nred}=\{j_1, \dots, j_r\}$ includes the indices of the left-singular vectors that form the columns of $\Vlin$. The complement of the set is   $\indexout{\nred}$, which leads to $\breve{\Vlin}$. Let us set $\nconsider = \min(\nfull, \nsnapshots)$ for ease of exposition, then we can write the singular value decomposition of $\snapshots = \leftsings\singvals\rightsings^{\top}$ according to the index sets $\indexin{\nred}$ and $\indexout{\nred}$, 
\begin{align*}
\snapshots=
\begin{bmatrix}
    \leftsings_{\indexin{\nred}} &
    \leftsings_{\indexout{\nred}}
\end{bmatrix}
\begin{bmatrix}
    \singvals_{\indexin{\nred}} & 0 \\ 0 & \singvals_{\indexout{\nred}}
\end{bmatrix}
\begin{bmatrix}
    \rightsings_{\indexin{\nred}} &
    \rightsings_{\indexout{\nred}}
\end{bmatrix}^\top,
\end{align*}
where $\Vlin = \leftsings_{\indexin{\nred}}=[\leftsing{j_1}, \dots, \leftsing{j_r}]$ contains the columns of $\leftsings$ included  by the greedy method and $\breve{\Vlin} = \leftsings_{\indexout{\nred}}$ includes the left-singular vectors with indices in $\indexin{\nred}$, i.e., the left-singular vectors that have not been selected by the greedy method. %
With this notation, we  restate the selection criterion of our greedy method given by the objective~\eqref{eq:Greedy:J} as
\begin{align}
  \hat{J}(\Vlin, \Vnonlin) = 
  \frobsq{\Vnonlin \featuremap(\Vlin^{\top}\snapshots) -
  \breve{\Vlin}\breve{\Vlin}^{\top}\snapshots} + \gamma \frobsq{\Vnonlin} \,.
  \label{eq:LstSqinSVDForm}
\end{align}
Minimizing \eqref{eq:LstSqinSVDForm} over $\Vlin$ and $\Vnonlin$ as in the greedy method in \eqref{eq:Greedy:GreedyOptiProblem} shows the greedy method %
seeks a basis matrix $\Vlin$ and a weight matrix $\Vnonlin$ such that $\Vnonlin h(\Vlin^{\top}\snapshots)$ approximates well $\breve{\Vlin}\breve{\Vlin}^{\top}\snapshots$, which is analogous to approximately satisfying \eqref{eq:Disc:VSWH} when setting $\snapshots_r = \Vlin^{\top}\snapshots$ and $\breve{\snapshots} = \breve{\Vlin}^{\top}\snapshots$. In particular,  
the weight matrix $\Vnonlin$ has to achieve two goals: First, its column space has to approximate well the subspace of the orthogonal complement of the basis matrix $\Vlin$ that is most important for approximating the data $\snapshots$ with respect to the error in the Frobenius norm.  
Second, the oblique projection $\Vnonlin^{+}\snapshots$ is approximated well by the lifted encoded data points $h(\snapshots_r)$. 

\newcommand{\features}{\featuremap(\singvals_{\indexin{\nred}}\rightsings_{\indexin{\nred}}^\top)}
The minimizer of~\eqref{eq:LstSqinSVDForm} is given by 
\begin{align*}
\Vnonlin=\breve{\Vlin}\breve{\Vlin}^{\top}\snapshots h(\Vlin^{\top}\snapshots)^{\top} (h(\Vlin^{\top}\snapshots)h(\Vlin^{\top}\snapshots)^{\top} + \gamma \boldsymbol{I})^{-1},
\end{align*}
and reveals that a necessary condition for a non-zero weight matrix $\Vnonlin$ is that the product 
\begin{equation}\label{eq:DiscussionC}
\bfC = \breve{\snapshots}h(\snapshots_r)^{\top}
\end{equation}
is non-zero. 
In particular, if the $\ell$-th row of $\bfC$ is (close to) zero, then the data points approximated on the manifold $\hat{\snapshots} = g_{\Vlin,\Vnonlin}(f_{\Vlin}(\snapshots))$ are (almost) orthogonal to the $\ell$-th column of $\breve{\Vlin}$ and thus the $\ell$-th column of $\breve{\Vlin}$ cannot be utilized to improve (substantially) upon the accuracy of the linear approximations in the space spanned by the columns of $\Vlin$.
Indeed, if all rows of $\bfC$ are zero, then the weight matrix $\Vnonlin$ is zero and thus the corresponding quadratic manifold collapses to the space spanned by the columns of the basis matrix $\Vlin$. Thus, for the quadratic manifold to achieve more accurate approximations than the space spanned by $\Vlin$, it is necessary that $\bfC$ contains non-zero rows.

We can interpret the matrix $\bfC$ as the unnormalized correlation matrix between the lifted encoded data points $h(\snapshots_r)$ and the coordinates $\breve{\snapshots}$ of the projections of the data points onto the orthogonal complement of $\Vlin$, which is in agreement with the discussion in Section~\ref{sec:Discussion:Correlation1}. Later in the numerical experiments, we will empirically show that using the leading $\nred$ left-singular vectors to form $\Vlin$ leads to many more rows in the unnormalized coefficient matrix $\bfC$ given in \eqref{eq:DiscussionC} that has magnitude close to zero than the left-singular vectors selected by the proposed greedy method.

\subsection{Nonlinear encoding}
\label{sec:nonlinear_encoding}
\newcommand{\gnobjective}{L_{GN}}
\newcommand{\jacres}{J_{GN}}
We follow \cite{GeelenWW2023Operator, SharmaMBGGK2023Symplectic, BarnettF2022Quadratic, BarnettFM2023Neural-network-augmented} and use linear encoder functions $f_{\Vlin}$ to find $\redstate = f_{\Vlin}(\fullstate)$ for a data point $\fullstate$. Other works such as \cite{JainTRR2017quadratic, RutzmoserRTJ2017Generalization,GeelenBW2023Learning} work with nonlinear encodes of $\fullstate$ onto $\redstate$. 
We now establish a connection between the linear encoding $f_{\Vlin}$ that we use and a nonlinear encoding that minimizes the reconstruction error. Let us consider the nonlinear least-squares problem to find $\hat{\fullstate} \in \mathcal{M}_{r}$ from $\fullstate$, 
\begin{align}
\label{eq:min_rec_error}
    \operatorname*{arg\,min}\limits_{\recoveredstate \in \mathcal{M}_r}\euclsq{\recoveredstate - \fullstate} \,.%
\end{align}
A solution $\hat{\fullstate}^*$  of \eqref{eq:min_rec_error} gives a point on the manifold $\mathcal{M}_r$ that is closest to the data point $\fullstate$. Equivalently to \eqref{eq:min_rec_error}, we can solve the unconstrained least-squares problem 
\begin{equation}\label{eq:eq:min_rec_errorG}
\operatorname*{arg\,min}_{\redstate \in \mathbb{R}^{\nred}} \euclsq{g_{\Vlin,\Vnonlin}(\redstate)-\fullstate}
\end{equation}
by using that any element $\hat{\fullstate}$ in $\mathcal{M}_r$ can be represented with an $\redstate \in \mathbb{R}^{\nred}$ via $\hat{\fullstate} = g_{\Vlin, \Vnonlin}(\redstate)$. 

Let us now consider the first-order optimality condition of \eqref{eq:eq:min_rec_errorG},  
\[
\bfJ_{g_{\Vlin, \Vnonlin}}(\redstate)^{\top} (g_{\Vlin,\Vnonlin}(\redstate) - \fullstate) = \boldsymbol 0\,,
\]
where $\bfJ_{g_{\Vlin, \Vnonlin}}(\redstate)$ is the Jacobian matrix of the decoder function $g_{\Vlin,\Vnonlin}$ with respect to $\redstate$,
\begin{equation}\label{eq:Prelim:JacobianG}
\bfJ_{g_{\Vlin, \Vnonlin}}(\redstate) = \Vlin + \Vnonlin h^{\prime}(\redstate) \in \mathbb{R}^{\nfull \times \nred}\,. %
\end{equation}
The function $h': \mathbb{R}^{\nred} \to \mathbb{R}^{r(r+1)/2 \times \nred}$ evaluates to the Jacobian matrix of the feature map $h$ at a point $\redstate$. Applying the Gauss-Newton method to numerically solve the nonlinear regression problem \eqref{eq:eq:min_rec_errorG} leads to the iterations
\begin{equation}\label{eq:QMDiscuss:GN}
\redstate^{(i + 1)} = \redstate^{(i)} + \delta\redstate^{(i)}\,, \qquad i = 0, 1, 2, 3, \dots\,,
\end{equation}
where the update $\delta\redstate^{(i)} \in \mathbb{R}^{\nred}$ at iteration $i$ is the minimal-norm solution of the linear regression problem
\[
\min_{\delta\redstate^{(i)} \in \mathbb{R}^{\nred}} \|\bfJ_{g_{\Vlin, \Vnonlin}}(\redstate^{(i)})\delta\redstate^{(i)} - g_{\Vlin,\Vnonlin}(\redstate^{(i)}) + \fullstate\|_2^2 + \|\delta \redstate^{(i)}\|_2^2\,.
\]

We can recover the linear encoder function $f_{\Vlin}$ by starting the Gauss-Newton iterations \eqref{eq:QMDiscuss:GN} at $\redstate^{(0)} = \boldsymbol 0 \in \mathbb{R}^{\nred}$ and stopping after $i = 1$ iterations, because this leads to the encoded point $\redstate = \Vlin^{\top}\fullstate = f_{\Vlin}(\fullstate)$.
Another perspective that motivates using the linear encoder function $f_{\Vlin}$ is that it sets the error $g_{\Vlin,\Vnonlin}(\redstate) - \fullstate$ orthogonal to the space spanned by the columns of $\Vlin$ rather than the space spanned by the Jacobian \eqref{eq:Prelim:JacobianG} of the decoder function $g_{\Vlin,\Vnonlin}$.
Thus, $f_{\Vlin}(\fullstate)$ is the solution to the problem
\[
\operatorname*{arg\,min}_{\redstate \in \mathbb{R}^{\nred}} \|\Vlin^{\top}(\decoder_{\Vlin,\Vnonlin}(\redstate) - \fullstate)\|_2^2\,.
\]
While we propose to use the computationally convenient linear encoder $f_{\Vlin}$; in the numerical examples, we compare the linear encoding obtained with $f_{\Vlin}$ to the nonlinear encoding obtained with running Gauss-Newton iterations for $i > 1$ steps.
}

\section{Numerical experiments}
\label{sec:numerical_experiments}
We demonstrate the greedy method on four different data sets.
The first data set represents an advecting wave, which is challenging to reduce with linear methods such as PCA. The same example is used as benchmark  in~\cite{GeelenWW2023Operator}. We then consider two data sets that describe more complicated wave behavior such as nonlinear waves and interacting pulse signals. 
The fourth example demonstrates the greedy construction of quadratic manifolds on a data set that describes a turbulent flow in a channel.

\subsection{Setup}\label{sec:NumExp:Setup}
We compare three approaches for constructing quadratic manifolds. The first one follows \cite{GeelenWW2023Operator,BarnettF2022Quadratic} and uses the leading $\nred$ left-singular vectors of the data matrix to span the subspace $\Vcal$ for the linear approximation. 
The weight matrix $\Vnonlin$ is fitted via the linear least-squares problem \eqref{eq:Prelim:LinearLSQApproach} using a training data matrix. The regularization parameter $\gamma$ for \eqref{eq:Prelim:LinearLSQApproach} is chosen from $\{10^{-8}, 10^{-7}, \dots, 10^{-2}\}$ so that it minimizes the objective of \eqref{eq:Prelim:LinearLSQApproach} on a validation data set. The second approach is based on alternating minimization as introduced in \cite{GeelenBW2023Learning}, which fits $\Vlin$ and $\Vnonlin$ via an alternating minimization scheme; see Appendix~\ref{appx:AlternatingMin} for the technical details of this approach. The alternating minimization approach depends on a range of hyper-parameters, which we discuss in Appendix~\ref{appx:AlternatingMin}.  The third approach is the proposed greedy construction, where we set $\nconsider=10\nred_{\text{max}}$, where $\nred_{\text{max}}$ is the largest reduced dimension considered in the respective experiment. We note that we perform the greedy step in line 5 of Algorithm~\ref{alg:ourquadmani} over the left-singular vectors with indices $1, \dots, \nconsider + i$ at greedy iteration $i$, instead of only up to $\nconsider$, so that the number of evaluations of the objective~\eqref{eq:Jprime} is constant $\nconsider$ over all greedy iterations $i = 1, \dots, \nred$.  The regularization parameter $\gamma$ is obtained via a grid search over $10^{-8}, 10^{-7}, \dots, 10^{-2}$, where we then use the one that minimizes the objective \eqref{eq:Greedy:GreedyOptiProblem} on a validation data set as in the first approach.
The feature map $h$ is fixed to the map defined in~\eqref{eq:condensed_kronecker}, independent of which approach is used to fit $\Vlin$ and $\Vnonlin$.

We report the relative error of approximating test data points that were not used during training or for validation. The relative error is computed as
\begin{equation}\label{eq:NumExp:RelErr}
  E_{\mathrm{rel}}(\snapshots^{(\mathrm{test})}) = \frac{1}{\left\|\snapshots^{(\mathrm{test})}\right\|_F}\left\|\lift(\embed(\snapshots^{(\mathrm{test})})) - \snapshots^{(\mathrm{test})}\right\|_F,
\end{equation}
where $g$ and $f$ are decoding and encoding maps, respectively, and $\snapshots^{(\mathrm{test})}$ is the test data matrix. 

\changed{
We will also report quantities to provide insights following the discussion in Section~\ref{sec:discussion_correlation}. Let $\Vlin \in \mathbb{R}^{\nfull \times \nred}$ be the basis matrix that contains the left-singular vectors with indices $i_1, \dots, i_{\nred}$ and let $\breve{\Vlin} \in \mathbb{R}^{\nfull \times (\nsnapshots-\nred)}$ contain the remaining left-singular vectors with indices $\{1, \dots, \nsnapshots\}\setminus\{i_1, \dots, i_{\nred}\}$ of a data matrix $\snapshots \in \mathbb{R}^{\nfull \times \nsnapshots}$ with $\nsnapshots \leq \nfull$. Following Section~\ref{sec:discussion_correlation}, we consider the matrix $\breve{\Vlin}^{\top}\snapshots$ and subtract the mean over the rows to obtain $\overline{\breve{\Vlin}^{\top}\snapshots}$. Analogously, we take the matrix $h(\Vlin^{\top}\snapshots)$ and subtract the row mean to obtain $\overline{h(\Vlin^{\top}\snapshots)}$.  We then will consider the normalized counterpart $\tilde{\bfC}\in \mathbb{R}^{\nsnapshots \times \nredmod}$ to the matrix $\bfC$ defined in \eqref{eq:DiscussionC}, with entries
\begin{equation}\label{eq:correlations_compute}
\tilde{\bfC}_{ij} = \frac{\left[\overline{(h(\Vlin^{\top}\snapshots))}\right]_{j, :}\,\,\left[\overline{(\breve{\Vlin}^{\top}\snapshots)}\right]_{i, :}}{\|[\breve{\Vlin}^{\top}\snapshots]_{i, :}\|_2 \|[h(\Vlin^{\top}\snapshots)]_{j, :}\|_2} \,,
\end{equation}
where $[\cdot]_{i, :}$ and $[\cdot]_{j, :}$ select the $i$-th row and the $j$-th row of the matrix argument, respectively. 
We interpret $\tilde{\bfC}$ as the correlation matrix between the lifted encoded data points given by $h(\Vlin^{\top}\snapshots)$ and the coordinates $\breve{\Vlin}^{\top}\snapshots$ of representing the data points in the basis spanned by the left-singular vectors that are not selected for $\Vlin$. 
We will compare the matrices $\tilde{\bfC}$ corresponding to quadratic manifolds constructed from the leading $\nred$ left-singular vectors only and quadratic manifolds constructed with the proposed greedy method.
}

In the following, data matrices are centered so that their row-wise mean is zero. We apply the same shift to the validation and test data. The train, validation, and test data  are available at \url{https://zenodo.org/records/10738062}. The experiments are run on four CPU cores of an Intel Xeon Platinum 8268 CPU. The nonlinear advection diffusion experiments were run on four cores of an Intel Xeon Platinum 8470QL CPU. The memory allocation is chosen depending on the data size of the example. In all examples, all methods have access to the same amount of memory.

\begin{figure}
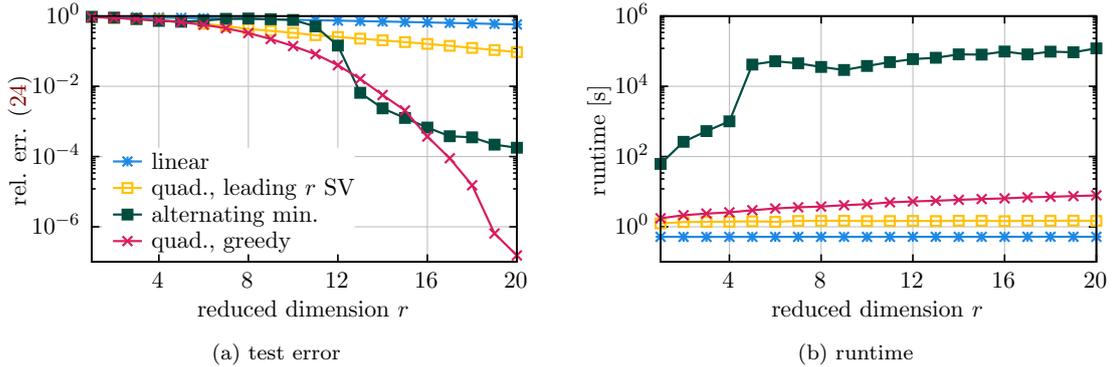

  \centering
  \begin{tabular}{cc}
  \resizebox{0.49\columnwidth}{!}{\input{./PlotSources/linear_transport_rel_errs_method_comparison.tex}} &
  \resizebox{0.49\columnwidth}{!}{\input{./PlotSources/linear_transport_runtimes_method_comparison.tex}} \\
  \scriptsize{(a) test error}                                                     &
  \scriptsize{(b) runtime}
  \end{tabular}
  \caption{Advecting wave: The proposed greedy approach achieves up to five orders of magnitude higher accuracy than using the leading $\nred$ left-singular vectors for the quadratic manifold construction. Additionally, the greedy approach incurs an orders of magnitude lower runtime than alternating minimization.} 
\label{fig:linear_transport_comparison}
\end{figure}

\subsection{Approximating advecting waves}\label{sec:NumExp:LinAdv}
We follow \cite{GeelenWW2023Operator} and consider the Gaussian bump function
\begin{equation}
s_0(x) = \frac{1}{\sqrt{0.0002\pi}} \exp\left(-\frac{(x-\mu)^2}{0.0002}\right), \quad x \in \R,
\end{equation}
with $\mu=0.1$ and then shift its mean in the spatial domain $[0, 1] \subset \mathbb{R}$ as $s(t, x) = s_0(x - ct)$, where $t \in [0, 0.1]$ and $c = 10$. Notice that $s$ is the solution to an instance of the linear advection equation \cite{GeelenWW2023Operator}. We generate a matrix $ [\fullstatei{1}, \dots, \fullstatei{2000}]\in \R^{4096 \times 2000}$ by evaluating $s$ at times $t_i = 0.2(i-1)/2000$ for $i=1, \dots, 2000$ over 4096 equidistant $x_1, \dots, x_{4096}$ in $[0, 1]$.
We then construct the training data matrix as $\snapshots^{\text{(train)}}=[\fullstatei{1}, \fullstatei{3}, \dots, \fullstatei{1999}]$, the validation data matrix as $\snapshots^{\text{(val)}}=[\fullstatei{2}, \fullstatei{6}, \dots, \fullstatei{1998}]$, and the test data matrix as $\snapshots^{\text{(test)}}=[\fullstatei{4}, \fullstatei{8}, \dots, \fullstatei{2000}]$
The regularization parameter for fitting $\Vnonlin$ is $\gamma=10^{-8}$ for the greedy approach and also for the approach using the leading $\nred$ left-singular vectors. For the alternating minimization algorithm, the regularization parameter is $\gamma=10^{-4}$.

\begin{figure}
  \resizebox{0.99\columnwidth}{!}{\input{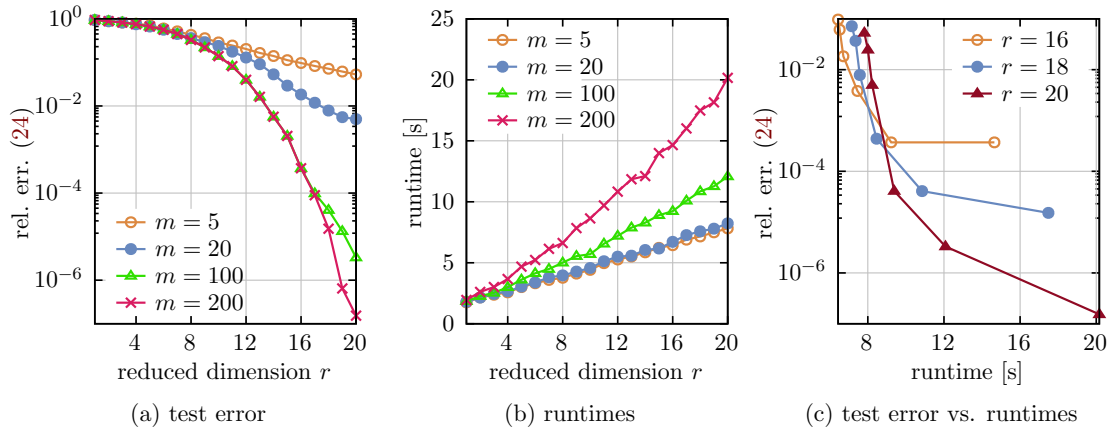}}
  \caption{Advecting wave: Runtime and accuracy of the greedy method can be traded off by varying the number $\nconsider$ of left-singular vectors that are considered in each greedy iteration.}
  \label{fig:linear_transport_greedy_maxdimconsider}
\end{figure}
  
In Figure~\ref{fig:linear_transport_comparison}a, we compare the relative %
error \eqref{eq:NumExp:RelErr} of approximating the test data on the quadratic manifolds obtained with three approaches as well as just the linear approximation error of the test data in the subspace spanned by the leading $\nred$ left-singular vectors of $\snapshots^{\text{(train)}}$. In agreement with the results in \cite{GeelenWW2023Operator}, quadratic manifolds obtained with setting $\Vcal$ to the subspace spanned by the leading $\nred$ left-singular vectors (see Section~\ref{subsubsec:prev_polynomial_feature_map}) leads to a lower relative error~\eqref{eq:NumExp:RelErr} on test data than the linear approximations in $\Vcal$ alone. The alternating minimization approach finds a quadratic manifold that achieves about two orders of magnitude lower test errors. 
The proposed greedy approach constructs a quadratic manifold that achieves an about three orders of magnitude lower relative error than alternating minimization and an almost five orders of magnitude lower error than when setting $\Vcal$ to the leading $\nred$ left-singular vectors of $\snapshots^{(\text{train})}$.  Let us now consider the computational costs of three approaches; see Figure~\ref{fig:linear_transport_comparison}b. The proposed greedy method incurs an orders of magnitude lower wallclock runtime than the alternating minimization approach. Notice that the costs of the hyper-parameter sweeps are not included in the runtime in Figure~\ref{fig:linear_transport_comparison}, which are substantial for alternating minimization compared to the greedy approach.
In Figure~\ref{fig:linear_transport_greedy_maxdimconsider}, we show that a lower runtime of the greedy method can be traded-off with a higher error by varying the number $\nconsider$ of the left-singular vectors that are considered during the greedy iterations.
\changed{To demonstrate this, Figure~\ref{fig:linear_transport_greedy_maxdimconsider}a shows the relative error for different choices of $\nconsider$ and Figure~\ref{fig:linear_transport_greedy_maxdimconsider}b shows the corresponding runtime as $\nred$ increases. Finally, Figure~\ref{fig:linear_transport_greedy_maxdimconsider}c shows the relative error over the runtime for different reduced dimensions $r \in \{16, 18, 20\}$. 
}

\changed{
Figure~\ref{fig:linear_transport_extra_errors}a shows the singular value decay of the training data matrix and Figure~\ref{fig:linear_transport_extra_errors}b shows a comparison of the relative error between using the linear encoder $f_{\Vlin}$ and the nonlinear encoding via the nonlinear least-squares problem \eqref{eq:min_rec_error} solved with the Gauss-Newton method as discussed in Section~\ref{sec:nonlinear_encoding}. The maximum number of Gauss-Newton iterations is 20 and we stop early when the relative error changes by less than $10^{-12}$. We also plot the lower bound derived in Section~\ref{sec:lower_bounds}. Finding an encoding via the nonlinear least-squares problem only leads to small improvements in accuracy. 
}

\changed{
In Figure~\ref{fig:linear_transport_correlations}, we show the magnitudes of the entries of the top $\nredmod = 210$ rows of $\tilde{\bfC}$, which is computed as in~\eqref{eq:correlations_compute} for $\nred=20$. Large values mean that the lifted encoded data points $h(\Vlin^{\top}\snapshots)$ are strongly correlated to the coordinates of projections $\breve{\Vlin}^{\top}\snapshots$ of the data points onto the left-singular vectors, which is a necessary condition for the quadratic correction term to be effective; see Section~\ref{sec:discussion_correlation}.  As shown in Figure~\ref{fig:linear_transport_correlations}a, when choosing the first $\nred$ left-singular vectors only for forming $\Vlin$, then there is only low  correlation between the lifted encoded data points and coordinates corresponding to left-singular vectors of index larger than 50. This indicates that the lifted encoded data points are not informative for approximating the coordinates corresponding to later left-singular vectors and thus the quadratic correction term is less effective. In contrast, our greedy method selects earlier and later left-singular vectors to form the basis matrix $\Vlin$ of the quadratic manifold that lead to strong correlation even with coordinates corresponding to  later left-singular vectors, as indicated by the non-zero magnitudes of the entries in rows larger than 50 in the matrix $\tilde{\bfC}$ shown in Figure~\ref{fig:linear_transport_correlations}b. This indicates that the quadratic manifold obtained with the greedy method can better leverage the quadratic correction term, which aligns well with the other results shown in this section and the discussion in Section~\ref{sec:discussion_correlation}. %
}

\begin{figure}
    \centering
    \resizebox{0.99\columnwidth}{!}{\input{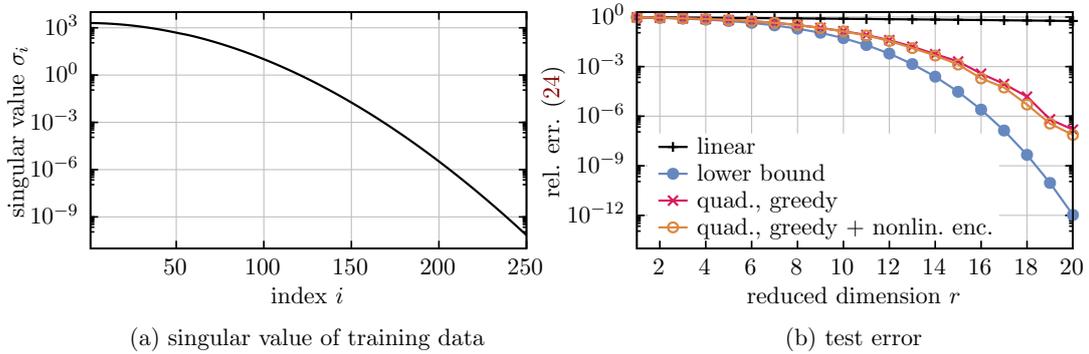}}
    \caption{Advecting wave: Singular value decay of training data set and relative error comparison to lower bound and nonlinear encoding.}
    \label{fig:linear_transport_extra_errors}
\end{figure}

\begin{figure}
    \centering
    \resizebox{0.99\columnwidth}{!}{\input{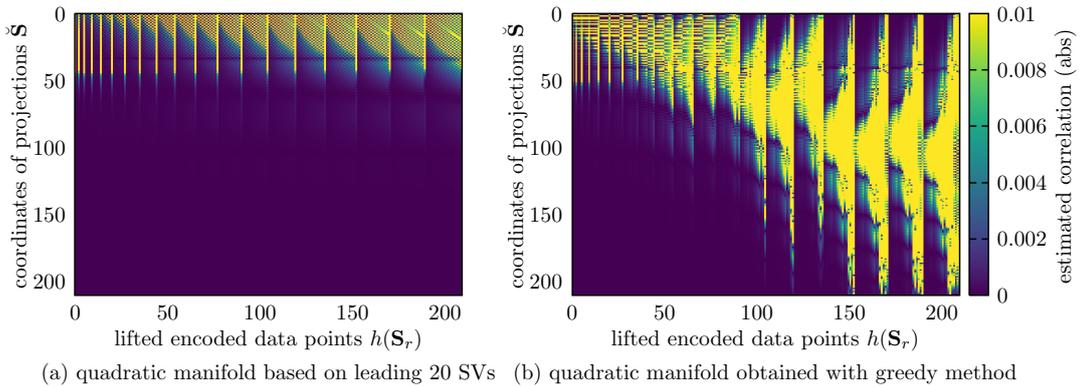}}
    \caption{Advecting wave: The plots show the magnitude of the entries of the correlation matrix \eqref{eq:correlations_compute}. It is necessary that the lifted encoded data points are correlated in the sense of \eqref{eq:correlations_compute} with the coordinates corresponding the later left-singular vectors for the quadratic correction term to be effective; see Section~\ref{sec:discussion_correlation}. Plot (a) shows that a basis matrix $\Vlin$ using the first $r = 20$ left-singular vectors only lead to lifted encoded data points that are poorly correlated with the coordinates of projections onto later left-singular vectors (lower rows). In contrast, the left-singular vectors selected by the proposed greedy method achieve stronger correlation as shown in plot (b) and in alignment with the other results in this section.} %
    \label{fig:linear_transport_correlations}
\end{figure}

\begin{figure}
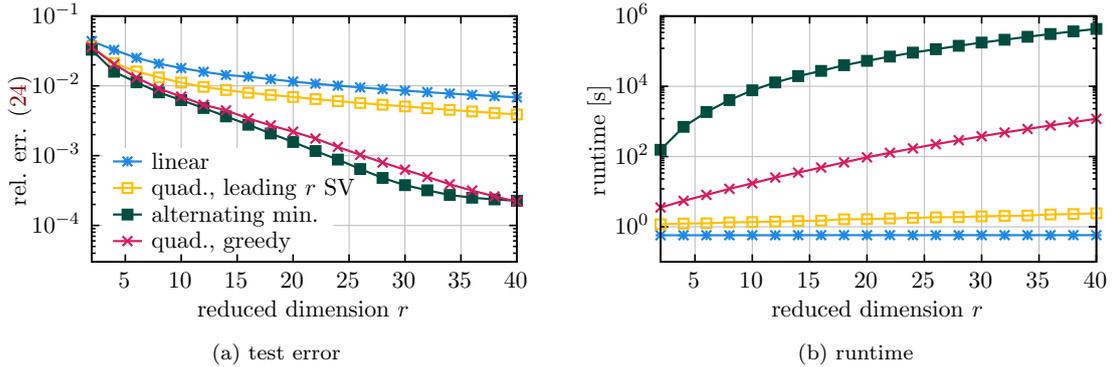

  \begin{tabular}{cc}
  \resizebox{0.49\columnwidth}{!}{\input{./PlotSources/burgers_rel_errs_method_comparison.tex}} &
  \resizebox{0.49\columnwidth}{!}{\input{./PlotSources/burgers_runtimes_method_comparison.tex}} \\
    \scriptsize{(a) test error}&
    \scriptsize{(b) runtime}
  \end{tabular}
  \caption{Nonlinear advection-diffusion: The proposed greedy method achieves an at least one order of magnitude lower error than using just the $\nred$ leading singular values for the quadratic manifold construction. The manifold found with alternating minimization achieves a comparable error but the runtime of alternating minimization is up to four orders of magnitude higher than the runtime of the proposed greedy method. And, alternating minimization requires extensive hyper-parameter tuning.} 
  \label{fig:burgers_example}
\end{figure}

\subsection{Waves described by nonlinear advection--diffusion processes}
Let us now consider waves that advect and change their shapes. The training data matrix is $\snapshots^{\text{(train)}} \in \R^{5000 \times 2000}$ and contains as columns data points that represent the waves; %
details of the data generation are described in Section~\ref{app:burgers_setup}.
The validation and test data matrices $\snapshots^{\text{(val)}} \in \R^{5000 \times 250}$ and $\snapshots^{\text{(test)}} \in \R^{5000 \times 250}$ consist of 250 data points that different from the data points in the training data.  

We compare in Figure~\ref{fig:burgers_example}a the accuracy obtained with the three methods discussed in Section~\ref{sec:NumExp:Setup} and the accuracy of the linear approximation in the subspace spanned by the $\nred$ leading left-singular vectors of $\snapshots^{\text{(train)}}$.
The proposed greedy approach achieves an about one order of magnitude lower error than just using the leading $\nred$ left-singular vectors for the space $\Vcal$. The error achieved by the greedy method is comparable to the error achieved by alternating minimization; however, alternating minimization is orders of magnitude more expensive, as is shown in Figure~\ref{fig:burgers_example}b. We use the regularization parameter $\gamma = 10^{-2}$ for alternating minimization, $\gamma=10^{-2}$ for the quadratic manifold based on the leading $\nred$ left-singular vectors, and $\gamma = 10^{-3}$ for the proposed greedy approach; all of these parameters were obtained with the hyper-parameter tuning as described in Section~\ref{sec:NumExp:Setup}.

In Figure~\ref{fig:burgers_reg_sensitivity}, we demonstrate that the proposed greedy method is robust against the regularization parameter $\gamma$. Whereas just using the leading $\nred$ left-singular vectors leads to quadratic manifolds with widely different performance for varying $\gamma$, the greedy method shows comparable performance for the range $\gamma \in [10^{-6},10^{-2}]$, which indicates its robustness in terms of hyper-parameter tuning. 

\begin{figure}
  \resizebox{0.99\columnwidth}{!}{\input{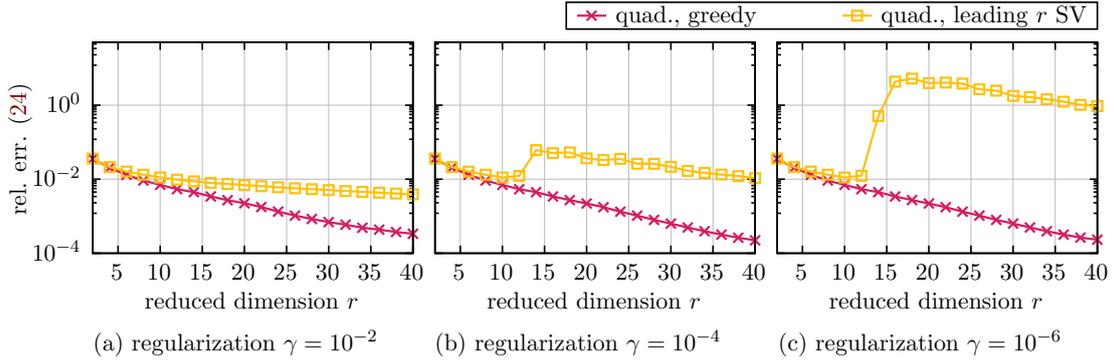}}
  \caption{Nonlinear advection-diffusion: The proposed greedy method is robust with respect to the regularization parameter~$\gamma$, whereas using the leading $\nred$ singular vectors for the manifold construction shows higher sensitivity to $\gamma$ in terms of error on the test data.}
  \label{fig:burgers_reg_sensitivity}
\end{figure}

\begin{figure}
    \centering
    \resizebox{0.99\columnwidth}{!}{\input{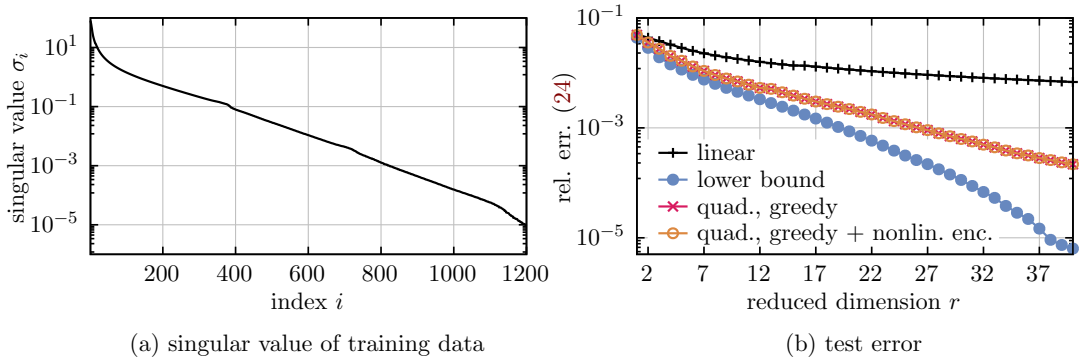}}
    \caption{Nonlinear advection diffusion: Singular value decay of training data set and relative error comparison to lower bound and nonlinear encoding.}
    \label{fig:burgers_extra_errors}
\end{figure}

\begin{figure}
    \centering
    \resizebox{0.99\columnwidth}{!}{\input{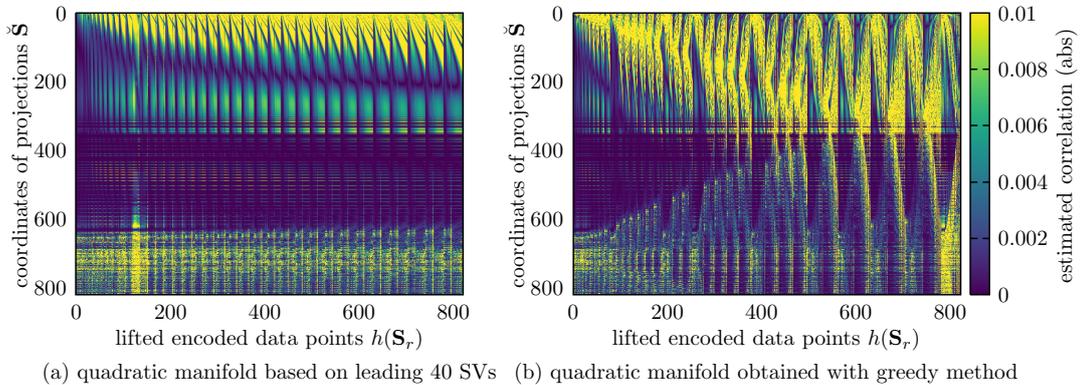}}
    \caption{Nonlinear advection diffusion: The lifted encoded data points corresponding to the greedily constructed quadratic manifold are higher correlated in the sense of \eqref{eq:correlations_compute} with the coordinates of projections onto left-singular vectors than selecting only the first $\nred$ left-singular vectors for $\Vlin$. A high correlation is a necessary condition for the quadratic correction term to be effective; see Section~\ref{sec:discussion_correlation}.} %
    \label{fig:burgers_correlations}
\end{figure}

\changed{
Figure~\ref{fig:burgers_extra_errors}a shows the singular value decay of the training data matrix and Figure~\ref{fig:burgers_extra_errors}b shows a comparison of the relative error between using a linear encoder and the nonlinear encoding obtained via the nonlinear least-squares problem \eqref{eq:min_rec_error} solved by the Gauss-Newton method; see  Section~\ref{sec:nonlinear_encoding}. We use the same hyper-parameters for the Gauss-Newton method as in the previous example. We also show the lower bound derived in Section~\ref{sec:lower_bounds}. In this example, the nonlinear encoding only leads to slight improvements in accuracy. 
}

\changed{
In Figure~\ref{fig:burgers_correlations}, we plot the magnitude of the entries of the top $\nredmod = 820$ rows of the correlation matrix $\tilde{\bfC}$ defined in \eqref{eq:correlations_compute} for $\nred=40$; see the previous example in Section~\ref{sec:NumExp:LinAdv} for detailed descriptions. Using the first $\nred$ left-singular vectors to form $\Vlin$ (plot (a)) leads to less correlation than using our greedy method for selecting $\Vlin$ (plot (b)), which is in agreement with the higher accuracy achieved by the quadratic manifold obtained with our greedy method for this data set. 
}

\subsection{Hamiltonian interacting pulse signals}
We now consider a pulse signal traveling in a two-dimensional domain, which is governed by the Hamiltonian wave equation. We impose periodic boundary conditions so that the initial pulse spreads out and then interacts with the pulse, leading to interaction patterns. %
We consider the velocities in x and y direction and density component on a $600 \times 600$ grid so that the dimension of the data points is $\nfull = 1.08 \times 10^{6}$; details of generating the data are given in Appendix~\ref{app:wave_setup}. We generate a matrix $[\fullstatei{1}, \dots, \fullstatei{1600}] \in \R^{1080000\times 1600}$ by sampling the velocities and density over time and split the columns into the training data matrix $\snapshots^{\text{(train)}}=[\fullstatei{1}, \fullstatei{3}, \dots, \fullstatei{1599}] \in \R^{108000\times 800}$, the validation data matrix $\snapshots^{\text{(val)}}=[\fullstatei{2}, \fullstatei{6}, \dots, \fullstatei{1598}]\in\R^{108000\times 400}$, and the test data matrix $\snapshots^{\text{(test)}}=[\fullstatei{4}, \fullstatei{8}, \dots, \fullstatei{1598}]\in\R^{108000\times 400}$. 

The high dimension of the data points means that alternating minimization becomes computationally intractable; see the previous example with dimension 4096 where alternating minimization already took almost eleven days wallclock time. We therefore only compare the linear approximation given by PCA and the quadratic manifolds obtained with the leading $\nred$ singular vectors and our greedy method. The regularization parameter were found to be $\gamma=10^{-8}$ for both quadratic-manifold methods.

The relative errors and runtimes are shown in Figure~\ref{fig:hamiltonian_wave_rel_errs}. The greedy method achieves an accuracy improvement of eight orders of magnitude compared to using the leading $\nred$ singular vectors for the quadratic manifold construction. At the same time, the runtime of the two methods for constructing quadratic manifolds is comparable: the runtime is dominated by computing the SVD of the training data matrix, which has to be done once in both methods as discussed in Section~\ref{sec:Greedy:RuntimeImprovement}. The point-wise errors are plotted in  Figure~\ref{fig:hamiltonian_wave_snapshots_errors}, where one can see that the quadratic manifold obtained with the leading $\nred$ singular values leads to visible oscillations in the approximation whereas such oscillations are not visible in the approximation obtained with the quadratic manifold constructed with the proposed greedy method. 
\changed{
The singular value decay of the training data matrix as well as the comparisons to the lower bound that is proved in Proposition~\ref{prop:lower_bound} are shown in Figure~\ref{fig:hamiltonian_wave_extra_errors}. We also plot in Figure~\ref{fig:hamiltonian_wave_extra_errors} the error achieved when data points are encoded by solving the nonlinear least-squares problem \eqref{eq:min_rec_error} via the Gauss-Newton method. The hyper-parameters of the Gauss-Newton method are the same as in the previous examples. In agreement with previous results, only slightly lower errors are achieved with the nonlinear encoding.}

\changed{
Figure~\ref{fig:hamiltonian_wave_correlations} shows the magnitude of the entries of the correlation matrix \eqref{eq:correlations_compute} corresponding to the quadratic manifold obtained by using the first $\nred$ singular vectors only (plot (a)) and by our greedy method (plot (b)). The dimension is $\nred = 20$. In agreement with previous examples and the other results shown for this example, the lifted encoded data points are stronger correlated with the coordinates corresponding to later left-singular vectors when the basis matrix $\Vlin$ is selected by the proposed greedy method than when it contains the first $\nred$ leading left-singular vectors only. Recall that correlation is a necessary condition for the quadratic manifold to achieve higher approximation accuracy than the linear approximation space spanned by the columns of $\Vlin$; see Section~\ref{sec:discussion_correlation}. 
}

\begin{figure}
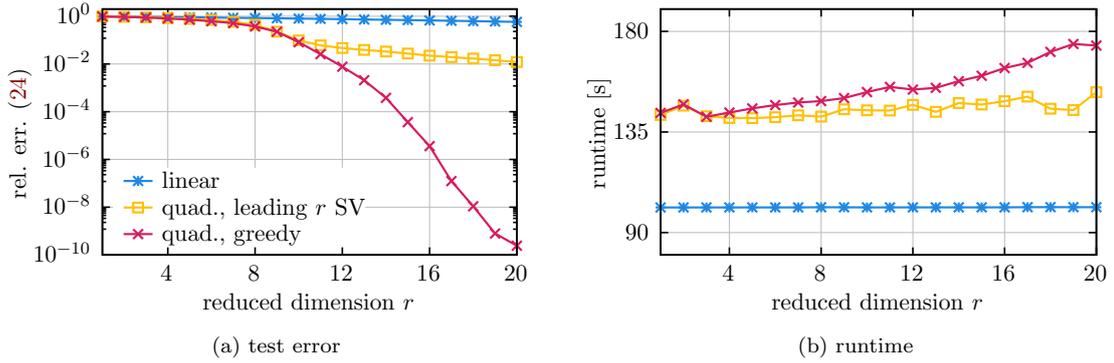

  \begin{tabular}{cc}
      \resizebox{0.49\columnwidth}{!}{\input{./PlotSources/hamiltonian_rel_errs_method_comparison.tex}} &
      \resizebox{0.49\columnwidth}{!}{\input{./PlotSources/hamwave_runtimes_method_comparison.tex}} \\
    \scriptsize (a) test error                                                                   &
    \scriptsize (b) runtime
  \end{tabular}
  \caption{Pulse signal: The proposed greedy approach for constructing quadratic manifolds leads to eight orders of magnitude more accurate approximations than quadratic manifolds based on the leading $\nred$ singular vectors. The runtime of the greedy approach is comparable to the runtime of the manifold construction using the leading $\nred$ singular vectors because the runtime is dominated by the SVD of the training data matrix, which has to be computed once in both methods; see Section~\ref{sec:Greedy:RuntimeImprovement}.}
  \label{fig:hamiltonian_wave_rel_errs}
\end{figure}

\begin{figure}
    \centering
      \resizebox{0.95\columnwidth}{!}{\input{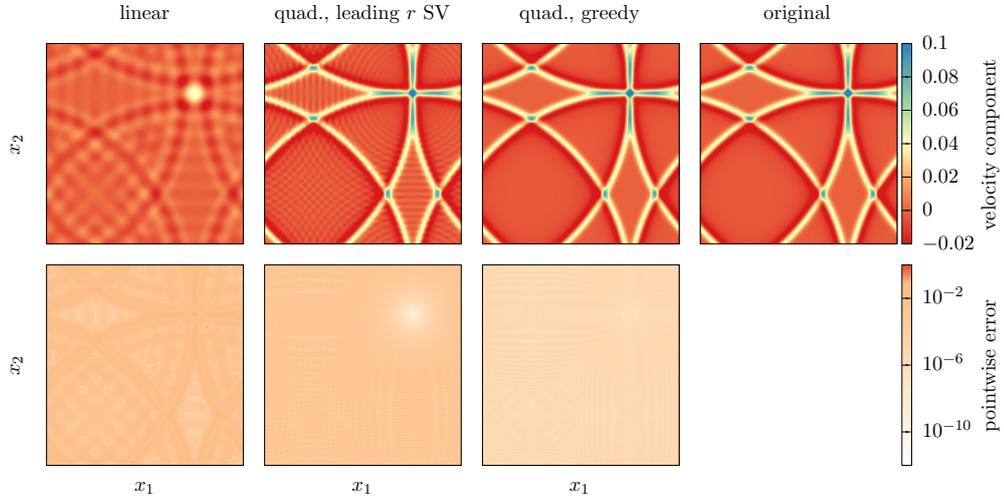}}
    \caption{Pulse signal: The quadratic manifold obtained with the leading $\nred$ singular values leads to visible oscillations in the approximation whereas such oscillations are not visible in the approximation obtained with the quadratic manifold constructed with the proposed greedy method.} 
    \label{fig:hamiltonian_wave_snapshots_errors}
\end{figure}

\begin{figure}
    \centering
    \resizebox{0.99\columnwidth}{!}{\input{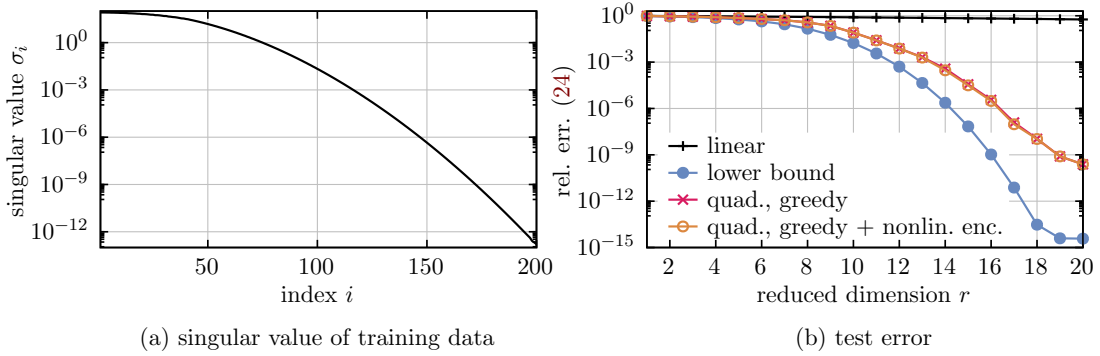}}
    \caption{Pulse signal: Singular value decay of training data set and relative error comparison to lower bound and nonlinear encoding.}
    \label{fig:hamiltonian_wave_extra_errors}
\end{figure}

\begin{figure}
    \centering
    \resizebox{0.99\columnwidth}{!}{\input{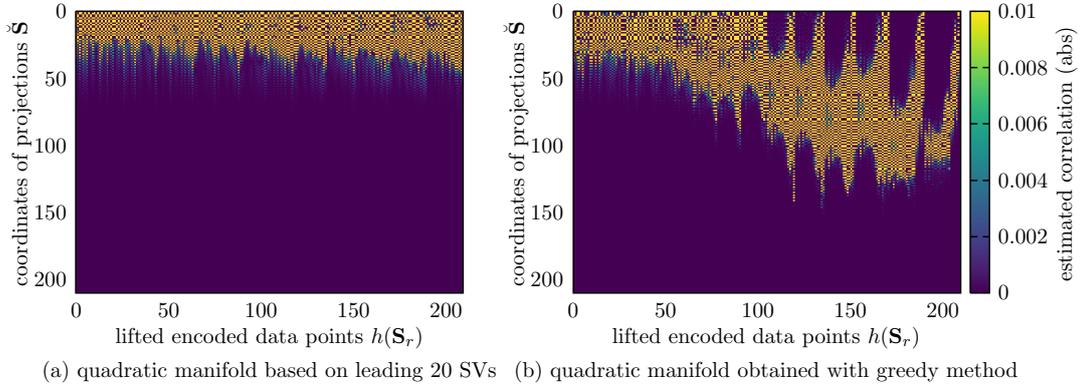}}
    \caption{Pulse signal: The lifted encoded data points on the quadratic manifold obtained with the proposed greedy method (plot (b)) leads to stronger correlation than the quadratic manifold obtained via the first $\nred$ left-singular vectors only.}
    \label{fig:hamiltonian_wave_correlations}
\end{figure}

\begin{figure}
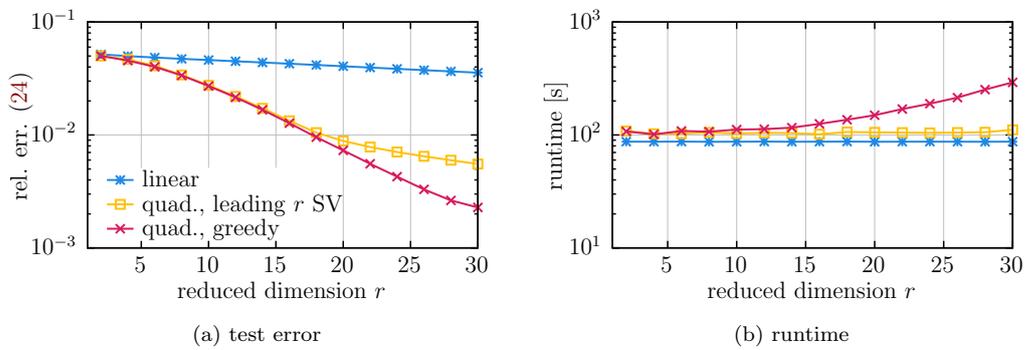

  \begin{tabular}{cc}
      \resizebox{0.45\columnwidth}{!}{\large\input{./PlotSources/channel_flow_rel_errs_method_comparison.tex}} &
      \resizebox{0.45\columnwidth}{!}{\large\input{./PlotSources/channel_flow_runtimes_method_comparison.tex}} \\
    \scriptsize (a) test error &
    \scriptsize (b) runtime
  \end{tabular}
  \caption{Turbulent flow: The greedy approach to constructing quadratic manifolds achieves about 50\% lower errors than using the leading $\nred$ singular vectors. The runtime of the greedy method is higher than using the leading $\nred$ singular vectors but remains within minutes for constructing the manifold in this example.}
  \label{fig:channel_flow_errs}
\end{figure}

\begin{figure}
    \centering
    \resizebox{0.95\columnwidth}{!}{\large\input{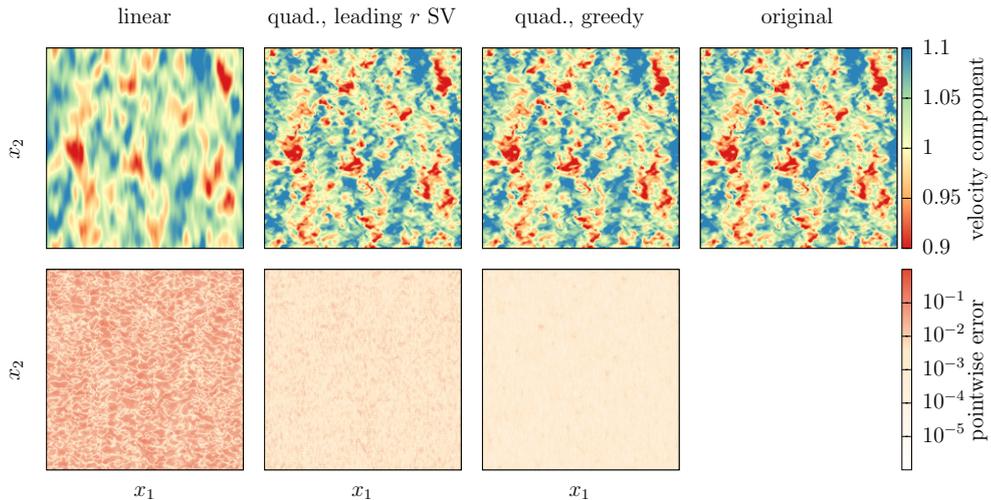}}
    \caption{Turbulent flow: The greedy method leads to a quadratic manifold with visibily lower point-wise error than using the leading $\nred$ singular vectors for constructing the quadratic manifold.}
    \label{fig:channel_flow_reconstructions}
\end{figure}

\subsection{Channel flow data}
We now consider a data set that represents the velocity field of a turbulent channel flow, which has been obtained wih the AMR-Wind simulation code~\cite{BrazellAVCSEH2021AMR-Wind}. The data comes from a wall-modeled large eddy simulation at Reynolds number $5200$, discretized using a staggered finite volume method into $384 \times 192 \times 32$ cells; we refer to the AMR-Wind simulation code for details~\cite{BrazellAVCSEH2021AMR-Wind}. The dimension is $\nfull = 384 \times 192 \times 32 = 2359296$. We have 1200 data points in total, which we split into training, validation, and test data as in the previous examples. 

The accuracy and runtime results are shown in Figure~\ref{fig:channel_flow_errs}. The approximations of the test data obtained with the quadratic manifolds have higher accuracy than linear approximations. The proposed greedy approach leads to a more accurate quadratic manifold than the manifold based on the leading $\nred$ singular vectors. The runtime of the greedy method is at least one order of magnitude higher than using the leading $\nred$ singular vectors; however, note that the runtime of the greedy is still minutes for constructing the quadratic manifold. The regularization parameter is set to $\gamma=10^{-2}$. In Figure~\ref{fig:channel_flow_reconstructions}, we show the approximations and their point-wise errors of a test data point and dimension $\nred=30$. For visualization, we show a $384 \times 192$-dimensional slice through the center of the channel flow field.
In agreement with the errors shown in Figure~\ref{fig:channel_flow_errs}a, the point-wise error of the approximation obtained with the greedy method is visibly lower than when using the leading $\nred$ singular vectors. 

\changed{
Figure~\ref{fig:channelflow_extra_errors}a shows the singular value decay of the training data matrix. Figure~\ref{fig:channelflow_extra_errors}b compares using the linear encoder to the nonlinear encoding obtained via the nonlinear least-squares problem \eqref{eq:min_rec_error}, which we numerically solve with the Gauss-Newton method as in the previous examples. We also plot the lower bound derived in Proposition~\ref{prop:lower_bound}. In agreement with the previous results, the nonlinear encoding helps to improve accuracy only slightly. 
}

\changed{
In Figure~\ref{fig:channelflow_correlations}, we plot the magnitude of the entries of the correlation matrix in~\eqref{eq:correlations_compute} up to row $\nredmod = 465$ for dimension $\nred=30$. In agreement with previous results, the correlation between the lifted encoded data points and the coordinates corresponding to later left-singular vectors is stronger for the quadratic manifold constructed with the proposed greedy method than the manifold obtained by using the first $\nred$ left-singular vectors only; we refer to Section~\ref{sec:discussion_correlation} for an in-depth discussion about the importance of the correlation. 
}

\begin{figure}
    \centering
    \resizebox{0.99\columnwidth}{!}{\input{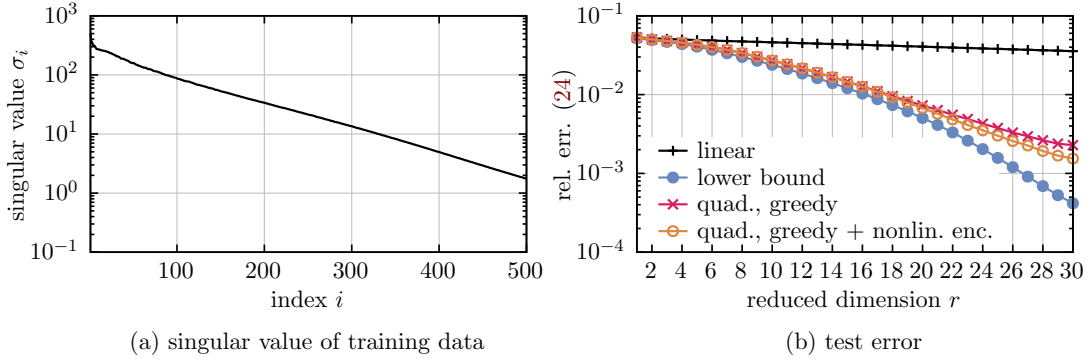}}
    \caption{Turbulent flow: Singular value decay of training data set and relative error comparison to lower bound and nonlinear encoding.}
    \label{fig:channelflow_extra_errors}
\end{figure}

\begin{figure}
    \centering
    \resizebox{0.99\columnwidth}{!}{\input{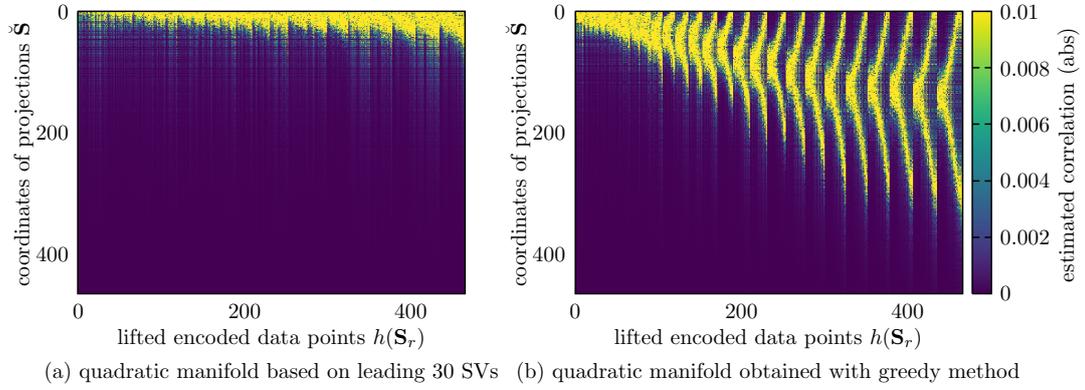}}
    \caption{Turbulent flow: The greedily selected basis matrix $\Vlin$ leads to a quadratic manifold so that the lifted encoded data points are well correlated to the coordinates corresponding to the later left-singular vectors, which is a necessary condition for quadratic manifolds to achieve higher accuracy than the corresponding linear approximation spaces.}
    \label{fig:channelflow_correlations}
\end{figure}

\section{Conclusions}\label{sec:Conc} Augmenting linear decoder functions with nonlinear correction terms given by feature maps can lead to higher accuracy than linear approximations alone; however, because the corrections are added to the decoder function, the feature maps are evaluated only at the encoded data points rather than the original, high-dimensional data points. In this work, we showed that linear best-approximations given by projections onto the principal components can lead to poor results in combination with correction terms because the data points encoded in the first few leading principal components can miss information that are important for the correction terms to be efficient. The greedy method introduced in this approach allows selecting principal components that are not necessarily ordered descending with respect to the  singular values. Numerical experiments demonstrate that an orders of magnitude higher accuracy can be achieved with the introduced greedy method and that the approach scales to data points with millions of dimensions.

Code is available at \url{https://github.com/Algopaul/greedy_quadratic_manifolds}.

\section*{Acknowledgements}
We thank Prakash Mohan (NREL) for sharing the turbulent flow data. This work was also supported in part through the NYU IT High Performance Computing resources, services, and staff expertise.

\bibliographystyle{myplain}
\bibliography{donotchange,references}

\begin{thebibliography}{10}

\bibitem{AntoulasBG2020Interpolatory}
A.~C. Antoulas, C.~Beattie, and S.~Gugercin.
\newblock {\em Interpolatory Methods for Model Reduction}.
\newblock SIAM, Philadelphia, 2020.

\bibitem{BarnettF2022Quadratic}
J.~Barnett and C.~Farhat.
\newblock Quadratic approximation manifold for mitigating the {Kolmogorov} barrier in nonlinear projection-based model order reduction.
\newblock {\em J. Comput. Phys.}, 464:111348, 2022.

\bibitem{BarnettFM2023Neural-network-augmented}
J.~Barnett, C.~Farhat, and Y.~Maday.
\newblock Neural-network-augmented projection-based model order reduction for mitigating the {Kolmogorov} barrier to reducibility.
\newblock {\em J. Comput. Phys.}, 492:112420, 2023.

\bibitem{BelkinN2003Laplacian}
M.~Belkin and P.~Niyogi.
\newblock Laplacian eigenmaps for dimensionality reduction and data representation.
\newblock {\em Neural Computation}, 15(6):1373 -- 1396, 2003.
\newblock Cited by: 5995; All Open Access, Green Open Access.

\bibitem{doi:10.1137/14097255X}
P.~Benner and T.~Breiten.
\newblock Two-sided projection methods for nonlinear model order reduction.
\newblock {\em SIAM Journal on Scientific Computing}, 37(2):B239--B260, 2015.

\bibitem{doi:10.1137/16M1098280}
P.~Benner, P.~Goyal, and S.~Gugercin.
\newblock $\mathcal{H}_2$-quasi-optimal model order reduction for quadratic-bilinear control systems.
\newblock {\em SIAM Journal on Matrix Analysis and Applications}, 39(2):983--1032, 2018.

\bibitem{https://doi.org/10.1002/pamm.202200049}
P.~Benner, P.~Goyal, J.~Heiland, and I.~Pontes~Duff.
\newblock A quadratic decoder approach to nonintrusive reduced-order modeling of nonlinear dynamical systems.
\newblock {\em PAMM}, 23(1):e202200049, 2023.

\bibitem{BennerGW2015survey}
P.~Benner, S.~Gugercin, and K.~Willcox.
\newblock A survey of projection-based model reduction methods for parametric dynamical systems.
\newblock {\em SIAM Rev.}, 57(4):483--531, 2015.

\bibitem{BrazellAVCSEH2021AMR-Wind}
M.~Brazell, S.~Ananthan, G.~Vijayakumar, L.~Cheung, M.~Sprague, {ExaWind Exascale Computing Project Team}, and {High Fidelity Modeling Project Team}.
\newblock {AMR-Wind}: {A}daptive mesh-refinement for atmospheric-boundary-layer wind energy simulations.
\newblock In {\em APS Division of Fluid Dynamics Meeting Abstracts}, APS Meeting Abstracts, page T29.007, 2021.

\bibitem{BuchfinkGH2023Approximation}
P.~Buchfink, S.~Glas, and B.~Haasdonk.
\newblock Approximation bounds for model reduction on polynomially mapped manifolds, 2023.

\bibitem{doi:10.1137/070694855}
T.~Bui-Thanh, K.~Willcox, and O.~Ghattas.
\newblock Model reduction for large-scale systems with high-dimensional parametric input space.
\newblock {\em SIAM Journal on Scientific Computing}, 30(6):3270--3288, 2008.

\bibitem{10.1145/1970392.1970395}
E.~J. Cand\`{e}s, X.~Li, Y.~Ma, and J.~Wright.
\newblock Robust principal component analysis?
\newblock {\em J. ACM}, 58(3), jun 2011.

\bibitem{ChenHYY2014direct}
C.~Chen, B.~He, Y.~Ye, and X.~Yuan.
\newblock The direct extension of {ADMM} for multi-block convex minimization problems is not necessarily convergent.
\newblock {\em Math. Program.}, 155(1–2):57–79, 2014.

\bibitem{CohenFSM2023Nonlinear}
A.~Cohen, C.~Farhat, A.~Somacal, and Y.~Maday.
\newblock Nonlinear compressive reduced basis approximation for {PDE}'s.
\newblock working paper or preprint, 2023.

\bibitem{CoupletSB2003Intermodal}
M.~Couplet, P.~Sagaut, and C.~Basdevant.
\newblock Intermodal energy transfers in a proper orthogonal decomposition–{G}alerkin representation of a turbulent separated flow.
\newblock {\em Journal of Fluid Mechanics}, 491:275–284, 2003.

\bibitem{DeanKKO1991Low}
A.~E. Deane, I.~G. Kevrekidis, G.~E. Karniadakis, and S.~A. Orszag.
\newblock Low‐dimensional models for complex geometry flows: Application to grooved channels and circular cylinders.
\newblock {\em Physics of Fluids A: Fluid Dynamics}, 3(10):2337--2354, 1991.

\bibitem{DonohoG2003Hessian}
D.~L. Donoho and C.~Grimes.
\newblock Hessian eigenmaps: Locally linear embedding techniques for high-dimensional data.
\newblock {\em Proceedings of the National Academy of Sciences}, 100(10):5591--5596, 2003.

\bibitem{DuraisamyIX2019Turbulence}
K.~Duraisamy, G.~Iaccarino, and H.~Xiao.
\newblock Turbulence modeling in the age of data.
\newblock {\em Annual Review of Fluid Mechanics}, 51(1):357--377, 2019.

\bibitem{GeelenBW2023Learning}
R.~Geelen, L.~Balzano, and K.~Willcox.
\newblock Learning latent representations in high-dimensional state spaces using polynomial manifold constructions.
\newblock In {\em 2023 62nd IEEE Conference on Decision and Control (CDC)}, pages 4960--4965, 2023.

\bibitem{GeelenBWW2024Learning}
R.~Geelen, L.~Balzano, S.~Wright, and K.~Willcox.
\newblock Learning physics-based reduced-order models from data using nonlinear manifolds.
\newblock {\em Chaos: An Interdisciplinary Journal of Nonlinear Science}, 34(3):033122, 2024.

\bibitem{GeelenWW2023Operator}
R.~Geelen, S.~Wright, and K.~Willcox.
\newblock Operator inference for non-intrusive model reduction with quadratic manifolds.
\newblock {\em Comput. Methods Appl. Mech. Engrg.}, 403:115717, 2023.

\bibitem{GolubV-L2013Matrix}
G.~H. Golub and C.~F. Van~Loan.
\newblock {\em Matrix Computations}.
\newblock Johns Hopkins Studies in the Mathematical Sciences. Johns Hopkins University Press, Baltimore, MD, 4 edition, 2013.

\bibitem{GouasmiPD2017priori}
A.~Gouasmi, E.~J. Parish, and K.~Duraisamy.
\newblock A priori estimation of memory effects in reduced-order models of nonlinear systems using the {M}ori-{Z}wanzig formalism.
\newblock {\em Proceedings of the Royal Society A: Mathematical, Physical and Engineering Sciences}, 473(2205):20170385, 2017.

\bibitem{goyal2024generalized}
P.~Goyal and P.~Benner.
\newblock Generalized quadratic embeddings for nonlinear dynamics using deep learning.
\newblock {\em arXiv}, 2211.00357, 2024.

\bibitem{goyal2024guaranteed}
P.~Goyal, I.~P. Duff, and P.~Benner.
\newblock Guaranteed stable quadratic models and their applications in {SIND}y and operator inference.
\newblock {\em arXiv}, 2308.13819, 2024.

\bibitem{GreplGreedy}
M.~A. Grepl and A.~T. Patera.
\newblock A posteriori error bounds for reduced-basis approximations of parametrized parabolic partial differential equations.
\newblock {\em ESAIM: M2AN}, 39(1):157--181, 2005.

\bibitem{5991229}
C.~Gu.
\newblock Qlmor: A projection-based nonlinear model order reduction approach using quadratic-linear representation of nonlinear systems.
\newblock {\em IEEE Transactions on Computer-Aided Design of Integrated Circuits and Systems}, 30(9):1307--1320, 2011.

\bibitem{HintonS2006Reducing}
G.~E. Hinton and R.~R. Salakhutdinov.
\newblock Reducing the dimensionality of data with neural networks.
\newblock {\em Science}, 313(5786):504--507, 2006.

\bibitem{JainTRR2017quadratic}
S.~Jain, P.~Tiso, J.~B. Rutzmoser, and D.~J. Rixen.
\newblock A quadratic manifold for model order reduction of nonlinear structural dynamics.
\newblock {\em Comput. Struct.}, 188:80--94, 2017.

\bibitem{PhysRevFluids.6.094401}
A.~A. Kaptanoglu, J.~L. Callaham, A.~Aravkin, C.~J. Hansen, and S.~L. Brunton.
\newblock Promoting global stability in data-driven models of quadratic nonlinear dynamics.
\newblock {\em Phys. Rev. Fluids}, 6:094401, Sep 2021.

\bibitem{KramerPW2024Learning}
B.~Kramer, B.~Peherstorfer, and K.~E. Willcox.
\newblock Learning nonlinear reduced models from data with operator inference.
\newblock {\em Annual Review of Fluid Mechanics}, 56(1):521--548, 2024.

\bibitem{kutz2016dynamic}
J.~N. Kutz, S.~L. Brunton, B.~W. Brunton, and J.~L. Proctor.
\newblock {\em Dynamic mode decomposition: data-driven modeling of complex systems}.
\newblock SIAM, 2016.

\bibitem{PanD2018Data-Driven}
S.~Pan and K.~Duraisamy.
\newblock Data-driven discovery of closure models.
\newblock {\em SIAM Journal on Applied Dynamical Systems}, 17(4):2381--2413, 2018.

\bibitem{Peherstorfer2022Breaking}
B.~Peherstorfer.
\newblock Breaking the {K}olmogorov barrier with nonlinear model reduction.
\newblock {\em Notices Amer. Math. Soc.}, 69(5):725--733, 2022.

\bibitem{PEHERSTORFER2016196}
B.~Peherstorfer and K.~Willcox.
\newblock Data-driven operator inference for nonintrusive projection-based model reduction.
\newblock {\em Computer Methods in Applied Mechanics and Engineering}, 306:196--215, 2016.

\bibitem{10.1115/1.1448332}
C.~Prud’homme, D.~V. Rovas, K.~Veroy, L.~Machiels, Y.~Maday, A.~T. Patera, and G.~Turinici.
\newblock Reliable real-time solution of parametrized partial differential equations: Reduced-basis output bound methods.
\newblock {\em Journal of Fluids Engineering}, 124(1):70--80, 2001.

\bibitem{QIAN2020132401}
E.~Qian, B.~Kramer, B.~Peherstorfer, and K.~Willcox.
\newblock Lift \& learn: Physics-informed machine learning for large-scale nonlinear dynamical systems.
\newblock {\em Physica D: Nonlinear Phenomena}, 406:132401, 2020.

\bibitem{QiaoZWZ2013explicit}
H.~Qiao, P.~Zhang, D.~Wang, and B.~Zhang.
\newblock An explicit nonlinear mapping for manifold learning.
\newblock {\em IEEE Transactions on Cybernetics}, 43(1):51 -- 63, 2013.
\newblock Cited by: 74; All Open Access, Green Open Access.

\bibitem{RoweisS2000Nonlinear}
S.~T. Roweis and L.~K. Saul.
\newblock Nonlinear dimensionality reduction by locally linear embedding.
\newblock {\em Science}, 290(5500):2323--2326, 2000.

\bibitem{rowley2009spectral}
C.~W. Rowley, I.~Mezi{\'c}, S.~Bagheri, P.~Schlatter, and D.~S. Henningson.
\newblock Spectral analysis of nonlinear flows.
\newblock {\em Journal of Fluid Mechanics}, 641:115--127, 2009.

\bibitem{RozzaHP2008Reduced}
G.~Rozza, D.~B.~P. Huynh, and A.~T. Patera.
\newblock Reduced basis approximation and a posteriori error estimation for affinely parametrized elliptic coercive partial differential equations.
\newblock {\em Arch. Comput. Methods Eng.}, 15(3):229--275, 2008.

\bibitem{RutzmoserRTJ2017Generalization}
J.~Rutzmoser, D.~Rixen, P.~Tiso, and S.~Jain.
\newblock Generalization of quadratic manifolds for reduced order modeling of nonlinear structural dynamics.
\newblock {\em Comput. Struct.}, 192:196--209, 2017.

\bibitem{Sagaut2006Large}
P.~Sagaut.
\newblock {\em Large Eddy Simulation for Incompressible Flows: An Introduction}.
\newblock Springer-Verlag, 2006.

\bibitem{SAWANT2023115836}
N.~Sawant, B.~Kramer, and B.~Peherstorfer.
\newblock Physics-informed regularization and structure preservation for learning stable reduced models from data with operator inference.
\newblock {\em Computer Methods in Applied Mechanics and Engineering}, 404:115836, 2023.

\bibitem{PhysRevFluids.5.054401}
I.~Scherl, B.~Strom, J.~K. Shang, O.~Williams, B.~L. Polagye, and S.~L. Brunton.
\newblock Robust principal component analysis for modal decomposition of corrupt fluid flows.
\newblock {\em Phys. Rev. Fluids}, 5:054401, May 2020.

\bibitem{Schlegel_Noack_2015}
M.~Schlegel and B.~R. Noack.
\newblock On long-term boundedness of {G}alerkin models.
\newblock {\em Journal of Fluid Mechanics}, 765:325–352, 2015.

\bibitem{schmid2010dynamic}
P.~J. Schmid.
\newblock Dynamic mode decomposition of numerical and experimental data.
\newblock {\em Fluid Mech.}, 656:5--28, 2010.

\bibitem{ScholkopfSM1998Nonlinear}
B.~Sch\"{o}lkopf, A.~Smola, and K.-R. M\"{u}ller.
\newblock Nonlinear component analysis as a kernel eigenvalue problem.
\newblock {\em Neural Computation}, 10(5):1299 – 1319, 1998.
\newblock Cited by: 6579; All Open Access, Green Open Access.

\bibitem{SHARMA2023116402}
H.~Sharma, H.~Mu, P.~Buchfink, R.~Geelen, S.~Glas, and B.~Kramer.
\newblock Symplectic model reduction of {H}amiltonian systems using data-driven quadratic manifolds.
\newblock {\em Computer Methods in Applied Mechanics and Engineering}, 417:116402, 2023.

\bibitem{SharmaMBGGK2023Symplectic}
H.~Sharma, H.~Mu, P.~Buchfink, R.~Geelen, S.~Glas, and B.~Kramer.
\newblock Symplectic model reduction of hamiltonian systems using data-driven quadratic manifolds.
\newblock {\em Comput. Methods Appl. Mech. Engrg.}, 417:116402, 2023.

\bibitem{TenenbaumSL2000Global}
J.~B. Tenenbaum, V.~de~Silva, and J.~C. Langford.
\newblock A global geometric framework for nonlinear dimensionality reduction.
\newblock {\em Science}, 290(5500):2319--2323, 2000.

\bibitem{tu2013dynamic}
J.~H. Tu, C.~W. Rowley, D.~M. Luchtenburg, S.~L. Brunton, and J.~N. Kutz.
\newblock On dynamic mode decomposition: Theory and applications.
\newblock {\em Journal of Computational Dynamics}, 1(2):391--421, 2014.

\bibitem{Uy2021}
W.~I.~T. Uy and B.~Peherstorfer.
\newblock Operator inference of non-{Markovian} terms for learning reduced models from partially observed state trajectories.
\newblock {\em Journal of Scientific Computing}, 88(3):91, Aug 2021.

\bibitem{MaatenH2008Visualizing}
L.~van~der Maaten and G.~Hinton.
\newblock Visualizing data using t-sne.
\newblock {\em Journal of Machine Learning Research}, 9(86):2579--2605, 2008.

\bibitem{https://doi.org/10.1002/fld.867}
K.~Veroy and A.~T. Patera.
\newblock Certified real-time solution of the parametrized steady incompressible {Navier}–{Stokes} equations: rigorous reduced-basis a posteriori error bounds.
\newblock {\em International Journal for Numerical Methods in Fluids}, 47(8-9):773--788, 2005.

\bibitem{WangABI2012Proper}
Z.~Wang, I.~Akhtar, J.~Borggaard, and T.~Iliescu.
\newblock Proper orthogonal decomposition closure models for turbulent flows: A numerical comparison.
\newblock {\em Computer Methods in Applied Mechanics and Engineering}, 237-240:10--26, 2012.

\bibitem{XieMRI2018Data-Driven}
X.~Xie, M.~Mohebujjaman, L.~G. Rebholz, and T.~Iliescu.
\newblock Data-driven filtered reduced order modeling of fluid flows.
\newblock {\em SIAM J. Sci. Comput.}, 40(3):B834--B857, 2018.

\bibitem{yildiz2024datadriven}
S.~Yildiz, P.~Goyal, T.~Bendokat, and P.~Benner.
\newblock Data-driven identification of quadratic representations for nonlinear {H}amiltonian systems using weakly symplectic liftings.
\newblock {\em arXiv}, 2308.01084, 2024.

\bibitem{ZannaB2020Data-Driven}
L.~Zanna and T.~Bolton.
\newblock Data-driven equation discovery of ocean mesoscale closures.
\newblock {\em Geophysical Research Letters}, 47(17):e2020GL088376, 2020.

\end{thebibliography}

\appendix
\section{Data generation}

\subsection{Nonlinear advection-diffusion processes}
\label{app:burgers_setup} The data has been generated by numerically solving the viscous Burgers' equation, which is
\begin{align}
  \label{eq:burgers}
  \begin{split}
    \partial_t s(t, x) + s(t,x) \partial_x s(t, x) - \nu \partial_{xx}^2 s(t, x) &= 0, \\
    s(t, x) &= s_0(x),
  \end{split}
\end{align}
where $\nu=10^{-4}$. We imposed periodic boundary conditions in the spatial domain $[-1, 1)$ and used the time interval $[0, 1]$.
We collect data for varying initial conditions given by
\begin{align}
  s_0(x) =  0.3 \exp\left(-\mu^2 (x + 0.5)^2\right) + 1,
\end{align}
where we vary the sharpness of the spike in the interval $\mu \in [10, 15]$. To collect training data, we discretize the spatial domain into $n=5000$ degrees of freedom using a finite difference scheme. Then we solve the resulting ordinary differential equation using a Runge-Kutta method of order four. This generates 500 data points per computed solution trajectory. We set the parameter $\mu$ to the values  $\{10, 11.25, 13.75, 15\}$ and compute four trajectories to generate the train data set, which consequently consists of $2000$ data points. Moreover, to generate the validation and test data set, we compute another trajectory, where we set $\mu$ to $12.5$ and assign the data points in this additional trajectory to either the validation set or the test set, alternatingly.

Figure~\ref{fig:burgers_reconstruction} shows the approximation of a test data point obtained with the quadratic manifolds and the linear approximation. In agreement with the errors reported in Figure~\ref{fig:burgers_example}, the linear approximation and the quadratic manifold based on the leading $\nred$ singular vectors lead to comparable approximations in terms of error. In contrast, the proposed greedy approach constructs a quadratic manifold that leads to an approximation of the test data point that cannot be distinguished visually anymore from the original data point.

\begin{figure}
\resizebox{0.99\columnwidth}{!}{\input{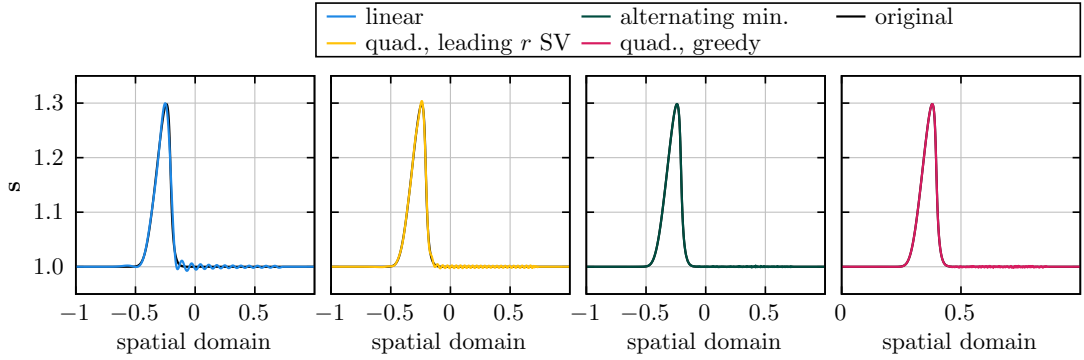}}
  \caption{Nonlinear advection-diffusion: The quadratic manifold obtained with the proposed greedy method provides an accurate approximation of the test data point.}
  \label{fig:burgers_reconstruction}
\end{figure}

\subsection{Hamiltonian interacting pulse signals}
\label{app:wave_setup}
The data was generated by numerically solving the acoustic wave equation over a two-dimensional spatial domain in Hamiltonian form with periodic boundary conditions in the spatial domain $[-4, 4)^2$,
\begin{align}
  \label{eq:acoustic_wave_equation}
  \begin{split}
    \partial_t \rho(t, x) 	&= -\nabla\cdot v(t, x)\,,\\
    \partial_t v(t, x) 		&= -\nabla\rho(t, x)\,, \\
    \rho(0, x) &= \rho_0(x)\,, \\
    v(t, 0) &= 0\,,
  \end{split}
\end{align}
where $\rho(t, x) \in \R$ denotes the density and $v(t,x) \in \R^2$ denotes the velocity field. We set the initial condition to
\begin{equation}
  \rho_0(x) = \exp\left(-(2\pi)^2 \left((x_1 - 2)^2 + (x_2 - 2)^2\right)\right),
\end{equation}
and $v(0, x)=0$. We use a finite difference scheme with 600 degrees of freedom in each spatial direction, which leads to a state-space dimension $\nfull=1\,080\,000$. We collect $1600$ solutions computed with the Runge-Kutta method of order 4 in the time-interval $[0,8]$.

\begin{figure}
    \centering
    \begin{tabular}{c}
  \resizebox{0.99\columnwidth}{!}{\input{./PlotSources/linear_transport_am_convergence.tex}} \\
  \resizebox{0.99\columnwidth}{!}{\input{./PlotSources/burgers_am_convergence.tex}} \\
    \end{tabular}
    \caption{Convergence behavior of the alternating minimization approach for the advecting wave and nonlinear advection-diffusion example with reduced dimension $\nred = 10$ and $\nred = 30$, respectively. The convergence is slow, which partially explains the high runtime of constructing quadratic manifolds with alternating minimization.} 
\label{fig:Appendix:am_convergence}
\end{figure}

\section{Alternating minimization}
\label{appx:AlternatingMin}

The alternating minimization algorithm presented in~\cite{GeelenBW2023Learning} consists of the three following alternating steps.
\begin{enumerate}
   \item Solve orthogonal Procrustes problem
      \begin{align}
      \label{eq:procrust_prob}
        [\Vlin, \Vnonlinhat] = \argmin\limits_{\mathbf{X} \in \R^{\nfull \times (\nred + \vbarconsider)}} 
        {\frac12
        \left\|
        \snapshots - \mathbf{X} \begin{bmatrix} \snapshots_r \\ \maniXi \end{bmatrix}
        \right\|}_F^2,
        \text{ such that } \mathbf{X}^\top \mathbf{X} = \identity,
      \end{align}
      where $\snapshots_r$ denotes the reduced data points.
    \item Compute $\maniXi$ by solving a least squares problem
      \begin{equation}
      \label{eq:xi_lstsq}
        \maniXi = \argmin\limits_{\mathbf{X} \in \R^{\vbarconsider \times \nredmod}}
        \left(
          \frac12 {\left\|
            \featuremap(\snapshots_r)^\top \mathbf{X}^\top - (\snapshots-\Vlin\snapshots_r)^\top \Vnonlinhat
          \right\|}_{F}^2 + \frac{\gamma}2 {\left\|\mathbf{X}\right\|}_F^2
        \right).
      \end{equation}
    \item Compute the reduced data points by solving the nonlinear optimization problem
      \begin{align}
        \label{eq:optprob_am}
        \snapshots_r = \argmin\limits_{X \in \R^{\nred \times \nsnapshots}} \frac12
        \sum\limits_{j=1}^k
        \left\|
        \fullstatei{j}-\begin{bmatrix}
          \Vlin & \Vnonlinhat
        \end{bmatrix}
        \begin{bmatrix}
          \redstatei{j} \\ \maniXi \featuremap(\redstatei{j})
        \end{bmatrix}
        \right\|.
      \end{align}
\end{enumerate}

In the first step, the hyper-parameter $\vbarconsider$ is introduced that sets the number of columns in $\Vnonlinhat$. A larger value of $\vbarconsider$ leads to a higher runtime but when $\vbarconsider$ is chosen too small, a decrease in accuracy can be noted. This is because by solving~\eqref{eq:procrust_prob} and subsequently~\eqref{eq:xi_lstsq} truncates the singular value decomposition of $\snapshots$ to its first $\vbarconsider$ components. For a fair comparison, we choose $\vbarconsider=\nconsider$ in our experiments. In the second step, the hyper-parameter is just the regularization parameter $\gamma$, which is also present in the method in~\cite{GeelenWW2023Operator} and in our method. The third step requires the solution of a nonlinear optimization problem. Here we follow the recommendation from~\cite{GeelenBW2023Learning} and use a Levenberg-Marquardt algorithm. More precisely, we use the setup from the supplementary code from~\cite{GeelenBWW2024Learning} available at~\url{https://github.com/geelenr/nl_manifolds/blob/main/nl_manifolds.ipynb}, which is based on \texttt{scipy.opt.least\_squares}. We set the option \texttt{max\_nfev} (which limits the number of objective function evaluations) to 1600 to obtain an acceptable runtime; recall that in one of our numerical experiments the runtime is already eleven days. 

Additionally to the hyper-parameters introduced in these three steps, the alternating minimization approach requires setting a maximum number of alternating minimization iterations as well as a convergence tolerance for the criterion~\cite[Eq. 16]{GeelenBW2023Learning}. Setting the convergence tolerance and the maximum number of iterations is a delicate issue because the alternating minimization approach has a sublinear convergence rate (see Figure~\ref{fig:Appendix:am_convergence}) so for later iterations the additional runtime has diminishing returns. Moreover, the convergence criterion varies by orders of magnitude between the different examples. This has led us to choose a small convergence tolerance of $10^{-12}$ to avoid under-reporting the accuracy of the alternating minimization scheme and additionally limit the runtime by setting the maximum number of iterations to $15\times\nred$ to keep the experiments computationally tractable.

\end{document}